\newcommand{\F}{\mc F}
\newcommand{\cS}{\mc S}
\newcommand{\mc}{\mathcal}
\newcommand{\sub}{\subseteq}
\newcommand{\nsub}{\nsubseteq}
\newcommand{\lra}{\Leftrightarrow}
\newcommand{\ra}{\Rightarrow}
\newcommand{\sm}{\setminus}
\newcommand{\tmax}{t\op{-Max}}
\newcommand{\Na}{\op{Na}}
\newcommand{\spec}{\op{Spec}}
\newcommand{\astmax}{\ast\op{-Max}}
\newcommand{\op}{\operatorname}
\newtheorem{theorem}{Theorem}[section]
\newtheorem{lemma}[theorem]{Lemma}
\newtheorem{proposition}[theorem]{Proposition}
\newtheorem{corollary}[theorem]{Corollary}
\newtheorem{remark}[theorem]{Remark}
\theoremstyle{definition}
\newtheorem{Qu}{Problem}
\begin{document}

	\title{$w$-stability and Clifford $w$-regularity of polynomial rings}

\subjclass[2000]{Primary:13A15; Secondary:13F20,13G05, 13F05}

\keywords{}

\author{Stefania Gabelli and Giampaolo Picozza}

\address{Dipartimento di Matematica e Fisica, Universit\`{a} degli Studi Roma
Tre,
Largo S.  L.  Murialdo,
1, 00146 Roma, Italy}

\email{gabelli@mat.uniroma3.it}

\email{picozza@mat.uniroma3.it}

\date{\today}


\begin{abstract}
We investigate the transfer of $w$-stability  and Clifford $w$-regularity from a domain $D$ to the polynomial ring $D[X]$. We show that these two properties pass from $D$ to $D[X]$ when $D$ is either integrally closed or it is Mori and $w$-divisorial.

\end{abstract}

\maketitle


\section{Introduction}

The transfer of properties from a ring $D$ to the polynomial ring $D[X]$ is an important subject of study in commutative algebra.
 A basic result in this direction is  Hilbert's Basis Theorem, which states that a polynomial ring over a Noetherian ring is  still Noetherian.
However,  several good properties of  classical domains of the ideal theory do not pass to the polynomial ring. For example,  the ring $\mathbb{Z}$ of the integers  is a principal ideal domain (for short, a PID), so it is a Dedekind domain, a Bezout domain and a Pr\"ufer domain. But  it is easily seen that the ring $\mathbb{Z}[X]$ has none of these properties.

More recently, several other classes of domains have been studied in mutiplicative ideal theory; for example, divisorial domains, stable domains and Clifford regular domains. Again, $\mathbb{Z}$ has all of these properties but none of them  passes to $\mathbb{Z}[X]$.

In fact, something much stronger is true: a polynomial ring over a domain that is not a field is never a PID, Dedekind, Bezout, Pr\"ufer, divisorial, stable or Clifford regular domain. The main obstruction is that all these classes of domains are in the class of DW-domains.

A DW-domain is a domain in which each nonzero ideal is a $w$-ideal (i.e., a semidivisorial ideal, following Glaz and Vasconcelos \cite{GV}) or, equivalently, a domain in which every maximal ideal is a $t$-ideal (see for example \cite[Proposition 2.2]{M1}).
Houston and Zafrullah proved that the $t$-maximal ideals of $D[X]$ are either uppers to zero or are extended from ideals of $D$ \cite[Proposition 1.1]{HZ}. Since in polynomial rings over domains that are not fields, there are always maximal ideals that are neither uppers to zero nor extended, then the polynomial ring $D[X]$ is never a DW-domain. That is, as shown with a direct proof by Mimouni, the following theorem holds:

\begin{theorem} \label{th1} \cite[Proposition 2.12]{M1}
Let $D$ be an integral domain. Then $D[X]$ is a DW-domain if and only if $D$ is a field.
\end{theorem}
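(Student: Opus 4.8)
The plan is to prove the two implications separately, taking the Houston--Zafrullah description of $t$-maximal ideals as the only substantive external input and otherwise relying on the two equivalent formulations of the DW condition recalled above.

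For the direction $(\la)$, assume $D=K$ is a field. Then $K[X]$ is a PID, so every nonzero ideal is principal, hence invertible and thus divisorial; in particular every nonzero ideal is a $w$-ideal. By definition this says precisely that $K[X]$ is a DW-domain. (Equivalently, every maximal ideal of $K[X]$ is generated by an irreducible polynomial, hence is principal, hence a $t$-ideal.)

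For the main direction $(\ra)$, I argue by contraposition: assuming $D$ is not a field, I will exhibit a maximal ideal of $D[X]$ that is not a $t$-ideal. Since $D$ is not a field it has a nonzero maximal ideal $\mathfrak m$, and I set $M:=\mathfrak m D[X]+XD[X]=(\mathfrak m, X)$. Because $D[X]/M\cong D/\mathfrak m$ is a field, $M$ is a maximal ideal of $D[X]$. Now I check that $M$ is neither an upper to zero nor extended from $D$. It is not an upper to zero because $M\cap D=\mathfrak m\neq (0)$. It is not extended because an extended ideal has the coefficientwise form $ID[X]=I[X]$ for some ideal $I$ of $D$, and $X\in M$ would then force the coefficient $1$ of $X$ to lie in $I$, giving $I=D$ and $M=D[X]$, contradicting properness of $M$.

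It remains to convert this into the failure of the $t$-ideal property. By \cite[Proposition 1.1]{HZ} every $t$-maximal ideal of $D[X]$ is an upper to zero or is extended from $D$, so the ideal $M$ constructed above is not $t$-maximal. On the other hand, since the $t$-operation is of finite character, any proper $t$-ideal is contained in some $t$-maximal ideal; hence if $M$ were a $t$-ideal it would be contained in a $t$-maximal ideal $N$, and maximality of $M$ as an ideal would force $N=M$, making $M$ itself $t$-maximal — a contradiction. Therefore $M$ is a maximal ideal that is not a $t$-ideal, so $D[X]$ is not a DW-domain. The only real obstacle is the existence of such an ``intermediate'' maximal ideal, and this is exactly where the hypothesis that $D$ is not a field is used (to furnish the nonzero $\mathfrak m$); the remaining steps are the dictionary between ``$t$-ideal'' and ``$t$-maximal'' and the coefficientwise description of extended ideals.
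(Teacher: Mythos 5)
Your proof is correct and follows essentially the same route as the paper's own argument: both rely on the characterization of DW-domains via maximal ideals being $t$-ideals together with the Houston--Zafrullah description of $t$-maximal ideals of $D[X]$ as uppers to zero or extended ideals. The only difference is that where the paper simply asserts that maximal ideals which are neither uppers to zero nor extended always exist when $D$ is not a field, you supply the explicit witness $M=(\mathfrak m, X)$ and verify it, which is a welcome filling-in of detail rather than a different approach.
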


All classes of domains mentioned above have been generalized, by requiring that the ideal theoretic properties that define these domains hold on $w$-ideals and not necessarily on the set of all nonzero ideals.
For example,  a  Pr\"ufer $v$-multiplication domain (for short, a P$v$MD) is a domain in which every localization at a $w$-maximal ideal is a valuation domain; thus we can say that  a P$v$MD is the $w$-version of a Pr\"ufer domain. A strong Mori domain is a domain which satisfies the ascending chain condition on $w$-ideals; hence, it is the $w$-version of a Noetherian domain. Similarly, a Krull domain is a strong Mori P$v$MD and so it is the $w$-version of a Dedekind domain.

Moreover, a $w$-principal domain is a domain in which every $w$-ideal is principal, and a $w$-Bezout domain is a domain in which every $w$-finite ideal (i.e., an ideal that is the $w$-closure of a finitely generated ideal) is principal   \cite{E2}: these two notions respectively generalize the notions of principal and Bezout domain. It is easy to see that the class of $w$-Bezout domains coincides with the class of domains with the greatest common divisor (for short, GCD-domains) \cite[Theorem 3.3]{E2}, while the class of $w$-principal domains coincides with the class of unique factorization domains (for short, UFDs) \cite[Theorem 2.5]{E2}. Summarizing, we have the following:

\begin{theorem} Let $D$ be an integral domain. Then:
\begin{itemize}
\item[(1)] $D$ is a Pr\"ufer domain if and only if it is a P$v$MD and a DW-domain.
\item[(2)] $D$ is a Dedekind domain if and only if it is a  Krull DW-domain.
\item[(3)] $D$ is a Bezout domain if and only if it is a $w$-Bezout DW-domain (i.e., a GCD-domain and a DW-domain).
\item[(4)] $D$ is a PID if and only if it is a $w$-PID and a DW-domain (i.e., a UFD and a DW-domain).
\end{itemize}
\end{theorem}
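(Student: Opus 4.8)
The plan is to exploit the single unifying principle that in a DW-domain the $w$-operation is trivial, so that every ``$w$-decorated'' notion collapses to its classical counterpart. Concretely, if $D$ is a DW-domain then $I^w = I$ for every nonzero (fractional) ideal $I$; consequently the $w$-ideals are exactly the ideals, the $w$-maximal ideals are exactly the maximal ideals, the $w$-finite ideals are exactly the finitely generated ideals, and the ascending chain condition on $w$-ideals coincides with the ACC on all ideals (so ``strong Mori'' becomes ``Noetherian''). Each of the four equivalences then follows by combining this collapse, which handles the backward implication, with a classical containment, which handles the forward implication.

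For the backward implications I would argue as follows. If $D$ is a P$v$MD and a DW-domain, then since the $w$-maximal ideals coincide with the maximal ideals, the P$v$MD hypothesis says that $D_M$ is a valuation domain for every maximal ideal $M$, which is exactly the local characterization of a Pr\"ufer domain; this gives (1). For (2), ``Krull'' means ``strong Mori P$v$MD''; under DW the strong Mori condition becomes Noetherian and, by (1), the P$v$MD condition becomes Pr\"ufer, so $D$ is a Noetherian Pr\"ufer domain, i.e.\ Dedekind. For (3), ``$w$-Bezout'' means every $w$-finite ideal is principal; since under DW the $w$-finite ideals are just the finitely generated ideals, this says every finitely generated ideal is principal, i.e.\ $D$ is Bezout. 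Finally for (4), ``$w$-PID'' means every $w$-ideal is principal, and under DW every ideal is a $w$-ideal, so every ideal is principal and $D$ is a PID.

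For the forward implications the standard classical containments do most of the work: every Pr\"ufer domain is a P$v$MD, every Dedekind domain is Krull, every Bezout domain is a GCD-domain, and every PID is a UFD. What remains is to show that each of the four classical classes sits inside the class of DW-domains. Since Bezout and PID domains are Pr\"ufer and Dedekind respectively, and Dedekind domains are Pr\"ufer, it suffices to prove that \emph{every Pr\"ufer domain is a DW-domain}.

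This last step is the heart of the argument and the main obstacle. By the criterion recalled in the Introduction it is enough to show that in a Pr\"ufer domain every maximal ideal --- indeed every nonzero ideal --- is a $t$-ideal. The key is that in a Pr\"ufer domain every nonzero finitely generated ideal $J$ is invertible, hence divisorial, so $J^v = J$. Since $I^t = \bigcup \{ J^v : J \sub I,\ J \text{ finitely generated} \}$, it follows that $I^t = \bigcup \{ J : J \sub I,\ J \text{ finitely generated} \} = I$ for every nonzero ideal $I$; thus the $t$-operation is the identity and, because $d \le w \le t$, so is the $w$-operation. Therefore $D$ is a DW-domain, which closes the forward directions and completes the proof. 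The only delicate point to record carefully is the invertible-implies-divisorial fact together with the formula for $I^t$, but both are standard; the rest is bookkeeping on how each ``$w$-version'' degenerates once the $w$-operation is trivial.
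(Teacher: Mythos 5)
Your proof is correct. Note that the paper itself offers no proof of this theorem: it is stated in the introduction as a summary, with the identifications $w$-Bezout $=$ GCD and $w$-principal $=$ UFD cited to \cite{E2} and the DW characterization (every maximal ideal is a $t$-ideal) cited to \cite{M1}. Your argument fills in exactly the intended details: the DW hypothesis trivializes the $w$-operation, so each $w$-notion collapses to its classical counterpart (this handles all backward directions), and the forward directions reduce to the single key fact that Pr\"ufer domains are DW, which you prove cleanly via invertible $\Rightarrow$ divisorial, giving $I^t=\bigcup\{J^v : J\subseteq I \text{ finitely generated}\}=I$ and hence $t=d$ (which is even stronger than DW, and covers Bezout, Dedekind and PID as subclasses of Pr\"ufer). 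One cosmetic remark: in parts (3) and (4) your forward directions use the GCD/UFD formulations while your backward directions use the $w$-Bezout/$w$-principal ones; welding these into a single equivalence implicitly invokes the identifications from \cite{E2} that the statement's ``i.e.'' presupposes. If you instead observe that Bezout $\Rightarrow$ $w$-Bezout and PID $\Rightarrow$ $w$-principal are immediate (a finitely generated, hence principal, ideal is its own $w$-closure), your proof of the main equivalences becomes entirely independent of \cite{E2}.
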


What is interesting is that, at least for these classical domains (PIDs, Dedekind, Bezout and Pr\"ufer domains), the fact of being a DW-domain is the ``only'' obstruction to the transfer of the property to the polynomial ring. Indeed, it is well known that the properties of being  a UFD, a Krull domain, a GCD-domain and  a P$v$MD extend to  polynomial rings.

So, roughly speaking, we can say that things go well, for ``classical rings'', when one removes the DW-property from the definition.

It is natural to ask whether the same happens for the classes of domains introduced more recently, such as divisorial, stable and Clifford regular domains.

All these three notions have been generalized by using the $w$-operation. For example, recall that a divisorial domain is a domain in which all  nonzero ideals are divisorial \cite{BS2}. A $w$-divisorial domain was defined by El Baghdadi and  Gabelli as a domain in which the $w$-operation coincides with the $v$-operation \cite{GE}.  Again,  $w$-divisorial domains can be considered as the $w$-version of divisorial domains.
 The transfer of $w$-divisoriality to polynomial rings was studied in \cite{GHP} by the  authors of this paper and E. Houston. We showed that with good hypotheses on $D$ (for example, if $D$ is integrally closed or Mori),  $w$-divisoriality passes to polynomial rings. The general case is still open and it is somehow linked with Heinzer's old conjecture about the integral closure of divisorial domains.

Recently, we have defined $w$-stable domains and Clifford $w$-regular domains  as natural generalizations of stable and Clifford regular domains  \cite{GP, GP1}. In the present paper, we start a study of polynomial rings over $w$-stable  and Clifford $w$-regular domains.

First, we observe that the $w$-stability (resp., Clifford $w$-regularity) of $D$ is a necessary condition for the $w$-stability (resp., Clifford $w$-regularity) of $D[X]$
and we show that determining whether this condition is also sufficient depends only on the ideals of $D[X]$ that are not extended from $D$. Moreover, by using the local characterization of $w$-stable domains, we prove that the $w$-stability of $D[X]$ is equivalent to the stability of the $v$-Nagata ring of $D$. An analogous result for Clifford $w$-regularity needs some additional hypotheses. Finally, we give two positive results. Namely, we show that if $D$ is integrally closed, then $D$ is $w$-stable (resp., Clifford $w$-regular) if and only if $D[X]$ is $w$-stable (resp., Clifford $w$-regular), if and only if the $v$-Nagata ring over $D$ is stable (resp., Clifford regular). We also prove  that $w$-stability and Clifford $w$-regularity  are equivalent for Mori domains and that if $D$ is Mori,  $D$ is $w$-stable and $w$-divisorial if and only if $D[X]$ is $w$-stable and $w$-divisorial if and only if the $v$-Nagata ring over $D$ is totally divisorial.

\section{Preliminaries}
Throughout this paper, $D$ will be an integral domain and $K$ its field of fractions. To avoid trivialities, we will assume that $D\neq K$. A \emph{local} domain is a domain with a unique maximal ideal, not necessarily Noetherian.
An \emph{overring} of $D$ is a domain $T$ such that $D\sub T\sub K$.
If $I$ is a fractional ideal of $D$, we call $I$ simply an \emph{ideal} and if $I\sub D$ we say that $I$ is an \emph{integral ideal}.

\subsection{Star operations}
Divisorial ideals, $t$-ideals and $w$-ideals are examples of \emph{star ideals}, that is ideals closed under a \emph{star operation} \cite[Section 32]{g1}.

A {\it star operation} is
a map $I\to I^\ast$  from the set ${\mc F} (D)$ of nonzero ideals of $D$ to
itself such that:

(1) $D^\ast = D$ and $(aI)^\ast = aI^\ast$, for all $a \in K \smallsetminus
\{0\}$;

(2) $I \sub I^\ast$ and $I \sub J \ra I^\ast \sub J^\ast$;

(3) $I^{\ast\ast} = I^\ast$.

 A nonzero ideal $I$ such that  $I=I^\ast$ is called a \emph{$\ast$-ideal}. Nonzero principal ideals are $\ast$-ideals.

 A star operation $\ast$ is of {\it finite type} if
 $I^\ast = \bigcup \{J^\ast\, ;\, J \sub I$  and $J$ is fini\-te\-ly
ge\-ne\-ra\-ted\},
 for each $I \in {\mc F} (D)$.
To any  star operation $\ast$, we can associate a star operation
$\ast_{f}$ of finite type
by defining $I^{\ast_{f}}= \bigcup J^\ast$, with the
union taken over all finitely generated ideals $J$ contained
in $I$. Clearly $I^{\ast_{f}} \sub I^\ast$ and $J^{\ast_{f}} = J^\ast$ if $J$ is finitely generated.

If $ I^\ast =J^\ast$ for
 some finitely generated ideal $J$, we say that $I$ is $\ast$-{\it finite} .

A prime ideal which is also a $\ast$-ideal is called a \emph{$\ast$-prime}; a \emph{$\ast$-maximal ideal} is a
$\ast$-ideal maximal in the set of proper integral $\ast$-ideals of $D$. A $\ast$-maximal ideal is prime.
We denote by $\astmax(D)$ the set of $\ast$-maximal ideals of $D$.
If $\ast$ is a star
operation of finite type, by Zorn's lemma each $\ast$-ideal is contained in
a $\ast$-maximal ideal and we have $D=\bigcap\{D_{M}\,;\; M\in \astmax(D)\}$.
We say that $D$
has \emph{$\ast$-finite character} if each nonzero element of $D$ is contained in
at most finitely many $\ast$-maximal ideals.

When $\ast $ is of finite type, a minimal prime of a $\ast$-ideal is
a $\ast$-prime. So, any minimal prime over a nonzero
principal ideal (in particular any height-one prime) is a $\ast$-prime, for any star operation $\ast$ of finite type.
We say that $D$ has \emph{$\ast$-dimension
one} if each $\ast$-prime ideal has height one.

 The \emph{identity} is a star operation denoted by $d$, $I^{d}:=I$
for each $I\in {\mc F} (D)$.

The $v$-\emph{operation}, or \emph{divisorial closure}, of $I\in {\mc F} (D)$ is defined by setting
$$I^{v}:=(D:(D:I)),$$
where for any $I$, $J\in {\mc F} (D)$ we set $(J\colon I):=\{x\in K\,;\; xI
\subseteq J\}$. A $v$-ideal of $D$ is also called a \emph{divisorial ideal}.
It is not difficult to check that
$$I^v=\bigcap \{yD\,;\; y\in K\,,\;I\sub yD\}.$$

The $t$-\emph{operation} is the star operation of finite type associated to $v$ and is therefore defined by setting
$$I^t:= \bigcup \{J^v \,;\; J
\in {\mc F}(D) \mbox{ finitely generated and } J \subseteq I\}.$$

Another star operation of finite type associated to a star operation $\ast$, often denoted by $\widetilde{\ast}$, is
defined by setting
$I^{\widetilde{\ast}}:= \bigcap\{ID_M\,;\;M \in \ast_f\text{-Max}(D)\}$.
It follows easily from the definition that  $\widetilde{\ast}=\widetilde{\ast_f}$ and $\widetilde{\ast}\text{-Max}(D)=\ast _f\text{-Max}(D)$.

 The star operation $\widetilde{v}=\widetilde{t}$ is usually denoted by $w$;
thus the $w$-\emph{operation}  is defined by
setting
$$I^{w} := \bigcap\{ID_{M}\,;\; M\in t\text{-Max}(D)\}$$
for each nonzero ideal $I$.

An equivalent definition of the $w$-operation is obtained by setting
 $$I^{w} := \bigcup \{(I\colon  J) ; \, J \mbox{ is finitely generated and } (D\colon J) = D\}.$$
By using the latter definition, one can see that the notion of $w$-ideal coincides
with the notion of {\it semi-divisorial} ideal introduced by  Glaz
and Vasconcelos in 1977 \cite{GV}.

As a star-operation, the $w$-operation
was first considered by  Hedstrom and  Houston in
$1980$ under the name of {\it $F_{\infty}$-operation} \cite{HH}.

It is well known that $I_w\sub I_t\sub I_v$, for each nonzero ideal $I$.

 For any star operation $\ast$, the set of $\ast$-ideals of $D$, denoted by $\mc F^\ast(D)$, is a
semigroup under {\it $\ast$-multiplication}, defined by $(I, J)\mapsto (IJ)^\ast$, with
unit $D$.
An ideal $I\in \mc F (D)$ is called {\it $\ast$-invertible} if $I^\ast$ is
invertible in $\mc F^\ast(D)$, equivalently $(I(D:I))^\ast=D$.

The quotient semigroup  $\mc S^\ast(D):=\mc F^\ast(D)/\mc P(D)$, where $\mc P(D)$ is the group of nonzero principal ideals of $D$, is called the \emph{$\ast$-class semigroup of $D$}.

\subsection{Polynomial rings and Nagata rings}

If $X$ is an indeterminate over $D$ and $I$ is an ideal of $D$, we set $I[X]:=ID[X]$. We will use repeatedly the following well-known properties.

\begin{lemma} \label{lemmapoli} Let $D$ be an integral domain and $\ast= w, t, v$. Denote by $\ast^\prime$ the respective star operation in $D[X]$. Then:
\begin{itemize}
\item[\rm(1)]  $(J:I)[X]=(J[X]:I[X])$, for any nonzero ideals  $I,J$  of $D$. In particular $I^\ast[X]=(I[X])^{\ast^\prime}$.

\item[\rm(2)] There is an inclusion preserving injective correspondence $I\mapsto I[X]$ between the set of integral $\ast$-ideals of $D$ and the set of integral $\ast^\prime$-ideals of $D[X]$, whose left inverse is the intersection. In addition, $I$ is $\ast$-invertible if and only if $I[X]$ is $\ast^\prime$-invertible.

\item[\rm(3)]  Let $N$ be a $t^\prime$-maximal ideal of $D[X]$.
 Then either $N=M[X]$, with
$M:=N\cap D\in \tmax(D)$ or $N=fK[X]\cap D[X]$ for some irreducible polynomial $f\in K[X]$.
In the second case, $D[X]_N=K[X]_{fK[X]}$ is a $DVR$.
\end{itemize}
\end{lemma}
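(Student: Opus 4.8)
The plan is to prove the three items in order, deriving (2) and most of (1) as formal consequences of the colon identity and isolating the one genuinely non-formal input in (3). For (1), I would first establish $(J:I)[X]=(J[X]:I[X])$ by a direct coefficient computation. The inclusion $\sub$ is immediate: if $g$ has all coefficients in $(J:I)$, then the coefficients of $g\,a$ lie in $(J:I)I\sub J$ for every $a\in I$, so $g$ carries $I[X]$ into $J[X]$. For the reverse inclusion, if $h\in(J[X]:I[X])$ then in particular $ah\in J[X]$ for every constant $a\in I$, and reading off coefficients gives $ab_j\in J$ for each coefficient $b_j$ of $h$; hence $b_j\in(J:I)$ and $h\in(J:I)[X]$. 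The ``in particular'' clause then follows by bootstrapping: for $v$ one applies the identity twice, $(I[X])^{v'}=(D[X]:(D[X]:I[X]))=(D[X]:(D:I)[X])=(D:(D:I))[X]=I^v[X]$. For $t$ and $w$ I would pass to finite type via the observation that a finitely generated $H\sub I[X]$ is contained in $J_0[X]$, where $J_0\sub I$ is the finitely generated ideal of $D$ generated by the coefficients occurring in a fixed generating set of $H$, while conversely any finitely generated $J\sub I$ extends to a finitely generated $J[X]\sub I[X]$; the $t$-case drops out of the $v$-case, and the $w$-case follows similarly from the Glaz--Vasconcelos description $I^w=\bigcup\{(I:J)\,;\,J\text{ finitely generated},\,(D:J)=D\}$ together with content-ideal (Dedekind--Mertens) considerations relating the Glaz--Vasconcelos ideals of $D[X]$ to those of $D$, or alternatively from the localization description once (3) is available.

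For (2), the map $I\mapsto I[X]$ lands among $\ast'$-ideals by the ``in particular'' clause of (1): if $I=I^\ast$ then $(I[X])^{\ast'}=I^\ast[X]=I[X]$. It is inclusion preserving by construction, and contraction to $D$ is a left inverse, since the constant polynomials lying in $I[X]$ are exactly the elements of $I$, so $I[X]\cap D=I$; this also yields injectivity. Finally, $\ast$-invertibility is preserved in both directions: by (1) we have $(I(D:I))^\ast[X]=\big(I[X]\,(D[X]:I[X])\big)^{\ast'}$, so $(I(D:I))^\ast=D$ holds if and only if $\big(I[X]\,(D[X]:I[X])\big)^{\ast'}=D[X]$.

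For (3), I would split according to $M:=N\cap D$. If $M=0$, then $N$ survives in the localization $K[X]=(D\sm\{0\})^{-1}D[X]$; since $K[X]$ is a PID, $NK[X]=fK[X]$ for some irreducible $f\in K[X]$, whence $N=fK[X]\cap D[X]$ and $D[X]_N=K[X]_{fK[X]}$ is a DVR. If $M\neq 0$, the claim is that $N=M[X]$ with $M\in\tmax(D)$; this is precisely the content of the Houston--Zafrullah structure theorem for $t$-maximal ideals cited in the Introduction, which I would invoke directly, or else reprove through content-ideal arguments showing that a $t'$-prime with nonzero trace on $D$ is extended and has $t$-maximal contraction.

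The main obstacle is this last point: the nonzero-contraction case of (3) is the only genuinely non-formal ingredient, everything else being a mechanical consequence of the colon identity; the sole subtlety beforehand is making the finite-type reduction in the $w$-part of (1) precise, which is why I would prefer to derive that case from (3) by localization rather than from the Glaz--Vasconcelos description directly.
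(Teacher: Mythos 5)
Your proposal is correct and follows essentially the same route as the paper: the paper proves the colon identity by the same coefficient computation, derives (2) formally from (1) exactly as you do (injectivity via $I[X]\cap D=I$, invertibility transferred through the colon identity), and proves (3) by the Houston--Zafrullah result plus \cite[Lemma 2.1]{UMT} in the nonzero-contraction case and by localizing at $S=D\setminus\{0\}$ in the zero-contraction case. The only difference is packaging: where you work out the $v$, $t$, $w$ cases of $I^\ast[X]=(I[X])^{\ast^\prime}$ by hand (double dual, finite-type reduction, Dedekind--Mertens or localization via (3)), the paper simply cites \cite[Propositions 4.1 and 4.3]{HH}, and it invokes \cite[Lemma 2.1]{UMT} for the $t$-maximality of $N\cap D$, a point your sketch correctly flags as needing a separate (easy) argument beyond the extendedness statement quoted in the Introduction.
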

\begin{proof} (1) The proof of the first statement is exactly as the proof given in \cite[Proposition 4.1]{HH} for $J=D$. The second statement is \cite[Proposition 4.3]{HH}.

(2) The correspondence $I\mapsto I[X]$ is injective because $I=I[X]\cap D$. If $I$ is $\ast$-invertible,  its extension $I[X]$ is $\ast^\prime$-invertible.  Conversely,
if $D[X]=(I[X](D[X]:I[X]))^{\ast^\prime}$, then by item (1)  $D[X]=(I(D:I))^\ast[X]$ and $D=(I(D:I))^\ast$ by the injectivity.

(3) If $N\cap D:= M\neq (0)$, $N=M[X]$ by \cite[Proposition 1.1]{HZ} and
$M$ is $t$-maximal by \cite[Lemma 2.1]{UMT}.
Assume that $N\cap D=(0)$. Since $K[X]=D[X]_S$ is a ring of fractions of $D[X]$, with $S:=D\sm \{0\}$,  then $NK[X]=N_S$ is a prime ideal of $K[X]$. Hence $NK[X]= fK[X]$, with $f\in K[X]$ irreducible and $N=N_S\cap D[X]=fK[X]\cap D[X]$. Finally, $D[X]_N=(D[X]_S)_{N_S}=K[X]_{fK[X]}$ is $DVR$.
\end{proof}

A tool very useful in the study of polynomial ring is given by the so-called Nagata rings of $D$, which are particular rings of fractions of $D[X]$.
As in \cite{K}, for a star
operation $\ast$ on $D$, we set $N(\ast):= \{h(X)\in D[X] \mid h(X) \neq 0 \;\hbox{and}\;
c(h)^{\ast}=D\}$, where $c(f)$ is the \emph{content} of the polynomial $f(X)$, that is the ideal of $D$ generated by the coefficients of $f(X)$. Since $c(h)^\ast=D$ if and only if $c(h)\nsub M$, for each $M\in \ast_f\op{-Max}(D)$, we see that
$N(\ast)=N(\ast_f)=N(\widetilde{\ast})=D[X]\sm\bigcup_{M\in\ast_f\op{-Max}(D)}M$.

The domain $\Na(D, \ast):=D[X]_{N(\ast)}$ is called the \emph{Nagata
ring of $D$ with respect to $\ast$}. For $\ast=d$, $\Na(D,d)=: D(X)$ is the usual
\emph{Nagata ring of $D$} \cite[Section 33]{g1}.

\begin{proposition} \label{Nagata} Let $D$ be an integral domain. Then:
\begin{enumerate}
\item  $\Na(D, v)=\Na(D, t)=\Na(D, w)$.
\item  $\op{Max}(\Na(D, v))=\{M\Na(D, v) \mid M\in \tmax(D)\}$.
\item If $Q= P\Na(D, v)$, with $P\in \spec(D)$, then $\Na(D, v)_{Q}= D[X]_{PD[X]}=
D_{P}(X)$.
\item  $\Na(D,v) = \bigcap\{D[X]_{MD[X]}, M\in \tmax(D)\}=\bigcap\{D_M(X), M\in \tmax(D)\}$.
\item $\Na(D,v)$ is a DW-domain.
\end{enumerate}
\end{proposition}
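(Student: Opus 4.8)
The plan is to establish the five items in order, letting each rest on its predecessors so that only the first, second and last require genuinely new work. The first item is immediate from the identities $N(v)=N(v_f)=N(\widetilde v)$ recorded just before the statement: since $t=v_f$ and $w=\widetilde v$, the three multiplicative sets $N(v),N(t),N(w)$ coincide, and hence so do the localizations $D[X]_{N(v)},D[X]_{N(t)},D[X]_{N(w)}$.

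For the second item I would describe $\spec(\Na(D,v))$ as the set of primes $P$ of $D[X]$ with $P\cap N(v)=\emptyset$, i.e.\ $P\sub\bc_{M\in\tmax(D)}M[X]$, and then pin down the maximal such $P$. The heart of the matter is a content computation: setting $c(P)=\sum_{f\in P}c(f)$, one has $P\cap N(v)\ne\emptyset$ if and only if $c(P)^t=D$. The nontrivial direction is to pass from finitely many coefficients of members of $P$ whose span has $t$-closure $D$ to a single $h\in P$ with $c(h)^v=D$; I would do this by adding the relevant members of $P$ after multiplying by high powers of $X$, so their supports become disjoint and the content of the sum is the sum of the contents. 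It then follows that $P\cap N(v)=\emptyset$ exactly when $c(P)\sub M$ for some $M\in\tmax(D)$, in which case $P\sub M[X]$. Since each $M[X]=MD[X]$ is itself a prime disjoint from $N(v)$ and $M[X]\cap D=M$ makes distinct such ideals incomparable, the $M[X]$ are precisely the maximal primes disjoint from $N(v)$; this yields $\Max(\Na(D,v))=\{M\Na(D,v)\;;\;M\in\tmax(D)\}$.

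The third item I would obtain from two standard facts, once $Q=P\Na(D,v)$ is a proper prime (so that $PD[X]\cap N(v)=\emptyset$): localizing a localization gives $\Na(D,v)_Q=(D[X]_{N(v)})_{PD[X]}=D[X]_{PD[X]}$, and the classical Nagata identity gives $D[X]_{PD[X]}=D_P(X)$. The fourth item is then formal: by the previous two items the maximal ideals of $\Na(D,v)$ are the $M\Na(D,v)$, with $\Na(D,v)_{M\Na(D,v)}=D[X]_{MD[X]}=D_M(X)$, and since a domain is the intersection of its localizations at its maximal ideals, $\Na(D,v)=\bc_{M\in\tmax(D)}D[X]_{MD[X]}=\bc_{M\in\tmax(D)}D_M(X)$. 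I would record here the by-product $\Na(D,v)\cap K=\bc_{M\in\tmax(D)}(D_M(X)\cap K)=\bc_{M\in\tmax(D)}D_M=D$, which is needed below.

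For the last item I would show $\Na(D,v)$ is a DW-domain by proving that each maximal ideal $\mathfrak m=M\Na(D,v)$ is a $t$-ideal, which by the characterization recalled in the introduction suffices. As $\mathfrak m$ is maximal, it is a $t$-ideal precisely when no finitely generated integral ideal $J\sub\mathfrak m$ has $J^v=\Na(D,v)$, equivalently (for integral $J$, since $(\Na(D,v):J)\supseteq\Na(D,v)$ always) when $(\Na(D,v):J)\supsetneq\Na(D,v)$ for every such $J$. Given such a $J$, I would write $J=(f_1,\dots,f_n)\Na(D,v)$ with $f_i\in\mathfrak m\cap D[X]=M[X]$, put $\mathfrak a=\sum_i c(f_i)\sub M$, and note that, $M$ being a $t$-ideal and $\mathfrak a$ finitely generated, $\mathfrak a^v=\mathfrak a^t\sub M\subsetneq D$, so $(D:\mathfrak a)\supsetneq D$. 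Choosing $x\in(D:\mathfrak a)\sm D$, the containment $x\mathfrak a\sub D$ gives $x(f_1,\dots,f_n)D[X]\sub D[X]$, hence $x\in(\Na(D,v):J)$; but $x\in K\sm D$ and $\Na(D,v)\cap K=D$, so $x\notin\Na(D,v)$, giving $(\Na(D,v):J)\supsetneq\Na(D,v)$ as wanted. The main obstacle is exactly this last step: not the reduction to finitely generated $J$, but the production of an element of $(\Na(D,v):J)$ lying outside $\Na(D,v)$, where the self-intersection identity $\Na(D,v)\cap K=D$ from the fourth item, together with the fact that $M$ is a proper $t$-ideal, is what makes the argument close.
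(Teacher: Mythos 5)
Your proof is correct, but it takes a genuinely different route from the paper's, which disposes of all five items by citation: (1) and (4) are quoted from Fontana--Loper \cite[Corollary 3.5 and Proposition 3.1]{FL}, (2) and (5) from Kang \cite[Proposition 2.1 and Corollary 2.3]{K}, and (3) is noted as clear. You instead reconstruct the underlying arguments from scratch. Your item (1), via the equality of multiplicative sets $N(v)=N(t)=N(w)$, uses exactly what the paper's preliminaries already record; your item (2) is in substance Kang's original argument, with the disjoint-support trick ($h=f_1+X^{m_1}f_2+\cdots$, exponents chosen so that $c(h)=c(f_1)+\cdots+c(f_n)$) doing the real work of showing that a prime $P$ meets $N(v)$ exactly when $c(P)^t=D$; and your item (5) replaces the citation by a direct trace argument: a finitely generated $J\sub M\Na(D,v)$ may be generated by polynomials $f_i\in M[X]$, the content ideal $\mathfrak a=\sum_i c(f_i)$ lies in the proper $t$-ideal $M$, and any $x\in (D:\mathfrak a)\sm D$ lies in $(\Na(D,v):J)\sm\Na(D,v)$ precisely because $\Na(D,v)\cap K=D$ --- a by-product you correctly extract from item (4), which you in turn derive from (2) and (3) together with the standard fact that a domain is the intersection of its localizations at its maximal ideals, rather than quoting \cite{FL}. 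All of these steps check out. The only blemish is a typo in item (2): disjointness of $P$ from $N(v)$ means $P\sub\bigcup_{M\in\tmax(D)}M[X]$, not $P\sub\bigcap_{M\in\tmax(D)}M[X]$; your content computation, which shows $P\sub M[X]$ for a single $M$, is the correct (and stronger) conclusion, and nothing downstream uses the intersection claim. In short, the paper's proof buys brevity by delegating to the literature, while yours buys a self-contained account that exhibits the mechanism behind Kang's results.
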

\begin{proof} (1) is  \cite[Corollary 3.5]{FL}.
(2) is \cite[Proposition 2.1 (2)]{K}.
(3) is clear, since $\Na(D,v)$ is a ring of fraction of $D[X]$ and $Q =P\Na(D, v)=P[X]\Na(D,v)$.
(4) is \cite[Proposition 3.1]{FL}.
(5) Because each maximal ideal of $\Na(D, v)$ is a $t$-ideal \cite[Proposition 2.1 and Corollary 2.3]{K}.
\end{proof}

The Nagata ring $D(X)$ inherits from $D$ more properties than $D[X]$ and has a better behavior, as the next result shows.

\begin{proposition} \label{D(X)} Let $D$ be an integral domain. Then:
\begin{enumerate}
\item $D$ is Noetherian if and only if $D(X)$ is Noetherian;
\item $D$ is Pr\"ufer if and only if $D(X)$ is a Bezout domain;
\item $D$ is Dedekind if and only if $D(X)$ is a PID.
\end{enumerate}\end{proposition}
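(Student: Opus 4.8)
The three equivalences will all be deduced from the description of $\Max(D(X))=\{MD(X) : M\in\Max(D)\}$ together with the localizations $D(X)_{MD(X)}=D_M(X)$ (the $d$-operation analogue of Proposition~\ref{Nagata}, classical for the Nagata ring, see \cite[Section 33]{g1}), and from two elementary facts about the multiplicative set $N(d)=\{h\in D[X] : c(h)=D\}$ of unit-content polynomials. The first is that contraction undoes extension, i.e. $ID(X)\cap D=I$ for every ideal $I$ of $D$: if $a\in D$ and $ag\in I[X]$ for some $g$ with $c(g)=D$, then $aD=c(ag)\subseteq I$, so $a\in I$. The second is the \emph{content formula} in the Nagata ring: if $f\in D[X]$ has invertible (equivalently, locally principal) content, then $fD(X)=c(f)D(X)$.

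For item (1), the forward implication is Hilbert's Basis Theorem together with the fact that $D(X)$ is a localization of $D[X]$, hence Noetherian whenever $D[X]$ is. For the converse I would argue by the ascending chain condition: given a chain $I_1\subseteq I_2\subseteq\cdots$ of ideals of $D$, the extended chain $I_1D(X)\subseteq I_2D(X)\subseteq\cdots$ stabilizes in the Noetherian ring $D(X)$, and contracting back via $I_nD(X)\cap D=I_n$ shows the original chain stabilizes, so $D$ is Noetherian.

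For item (2), I plan to prove each implication directly. Suppose $D$ is Pr\"ufer and let $J=(g_1,\dots,g_m)D(X)$ be a finitely generated ideal, with $g_i\in D[X]$ after clearing denominators. Choosing $N$ larger than all the degrees and setting $G:=\sum_{i=1}^m X^{(i-1)N}g_i$, the coefficients of the $g_i$ occupy disjoint ranges, so that $c(G)=\sum_i c(g_i)$. Being finitely generated, this ideal is invertible since $D$ is Pr\"ufer, so the content formula yields $c(G)D(X)=GD(X)$. Now $J\subseteq c(G)D(X)$ always holds (each $g_i$ has its coefficients in $c(G)$), while $GD(X)\subseteq J$ because $G$ is a $D[X]$-combination of the $g_i$; hence $J=GD(X)$ is principal, and $D(X)$ is Bezout. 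Conversely, if $D(X)$ is Bezout it is in particular Pr\"ufer, so each localization $D(X)_{MD(X)}=D_M(X)$ is a local Pr\"ufer domain, that is, a valuation domain; since $D_M(X)\cap K=D_M$ (the same unit-content computation as above), the valuation property descends to $D_M$, and as this holds for every $M\in\Max(D)$, $D$ is Pr\"ufer.

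Finally, item (3) follows by combining (1) and (2), using that a Dedekind domain is precisely a Noetherian Pr\"ufer domain and a PID is precisely a Noetherian Bezout domain: thus $D$ is Dedekind iff $D$ is Noetherian and Pr\"ufer iff $D(X)$ is Noetherian and Bezout iff $D(X)$ is a PID. The main obstacle is the content formula $fD(X)=c(f)D(X)$ for locally principal content, which is exactly what lets a finitely generated ideal of $D(X)$ collapse to a single generator; I would establish it locally at each $MD(X)$, where $c(f)D_M$ is principal and therefore generated by a single coefficient $a$ of $f$, so that $f=a\tilde f$ with $\tilde f$ of unit content — hence a unit in $D_M(X)$ — giving $fD_M(X)=aD_M(X)=c(f)D_M(X)$, and then glue by the local–global principle applied to the inclusion $fD(X)\subseteq c(f)D(X)$ of finitely generated ideals.
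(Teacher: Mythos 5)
Your proposal is correct, but it takes a genuinely different (more self-contained) route than the paper, which proves nothing directly: the paper disposes of all three items by citation, invoking \cite[p.430]{giampaNoeth} for (1), \cite[Theorem 33.4]{g1} for (2), and \cite[Proposition 38.7]{g1} for (3). What you have done, in effect, is reconstruct the classical arguments underlying those citations. Your treatment of (1) (Hilbert Basis Theorem plus localization one way, extension--contraction of chains via $I D(X)\cap D = I$ the other way) and your treatment of (2) are exactly the standard Gilmer-style proofs: the disjoint-degree trick $G=\sum_i X^{(i-1)N}g_i$ giving $c(G)=\sum_i c(g_i)$, combined with the content formula $fD(X)=c(f)D(X)$ for locally principal content, is the engine of \cite[Theorem 33.4]{g1}, and your local proof of that content formula (in a local domain a finitely generated principal ideal is generated by one of its given generators, so $f=a\tilde f$ with $\tilde f$ of unit content, hence a unit of $D_M(X)$) is sound, as is the descent of the valuation property through $D_M(X)\cap K=D_M$ in the converse direction. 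Your derivation of (3) from (1) and (2) via the characterizations ``Dedekind $=$ Noetherian $+$ Pr\"ufer'' and ``PID $=$ Noetherian $+$ Bezout'' is also a legitimate shortcut compared with citing \cite[Proposition 38.7]{g1} separately. The trade-off is the obvious one: the paper's version is appropriate for a research article where these facts are classical background, while your version buys independence from the literature at the cost of length, and it also makes visible exactly which properties of the Nagata ring are used (the description of $\Max(D(X))$, the contraction identity, and the content formula), which is in the spirit of how the paper later uses $\Na(D,v)$ in Proposition \ref{Nagata}.
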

\begin{proof}
(1) see for example \cite[p.430]{giampaNoeth}.
(2) is \cite[Theorem 33.4]{g1}.
(3) is \cite[Proposition 38.7]{g1}.
\end{proof}

\section{$w$-Stability and  Clifford $w$-regularity of polynomial rings}

Let $S$ be a commutative multiplicative semigroup. An element $x\in S$ is called \emph{von Neuman regular} (for short, \emph{vN-regular})  if there exists an element $a\in S$ such that $x=x^2a$. Idempotent and invertible elements are vN-regular.
By a well-known theorem of Clifford, $S$ is a disjoint union of groups if and only if  all its elements are vN-regular: in this case, $S$ is called a \emph{Clifford semigroup}.

A domain $D$  is called a \emph{Clifford regular domain} if its class semigroup $\mc S(D)$ is Clifford regular.

Dedekind domains are trivial examples of Clifford regular domains.
 Bazzoni and Salce showed that all valuation domains are Clifford regular and gave a complete description of the structure of  $\mc S(D)$ in that case  \cite{BS}.
Zanardo and Zannier investigated the class semigroups of orders in number fields and showed that all orders in quadratic fields are Clifford regular domains \cite{ZZ}. The study of Clifford regularity was then carried on by Bazzoni \cite{B1, B2, B3, B4}.

A particular class of Clifford regular domains is given by stable domains.

We recall that a nonzero ideal
$I$ of $D$ is said to be \emph{stable} if it is invertible in the overring $E(I):=(I:I)$ of $D$, which is the endomorphism ring of $I$. A domain is (\emph{finitely}) \emph{stable} if each (finitely generated) ideal  is invertible in its endomorphism ring. Stable
domains have been thoroughly investigated by  Olberding  \cite{O3, O5, O1, O2}.

When $I$ is stable, we have $I(E(I):I)=E(I)$, so that $I=IE(I)=I^2(E(I):I)$ is vN-regular. It follows that stable domains are Clifford regular.
Conversely, not all Clifford regular domains are stable: in fact,  a valuation domain is always Clifford regular \cite{BS}, but it is stable if and only if  $P\neq P^2$, for each nonzero prime ideal $P$ \cite[Proposition 4.1]{O3}. On the other hand,  Clifford regular domains are finitely stable, so that in the Noetherian case Clifford regularity coincides with stability  \cite[Theorem 3.1]{B3}.

Stability with respect to star operations (and more generally to semistar operations) was introduced and studied by the authors of this paper in \cite{GP}.

The first attempt to extend the notion of Clifford regularity in the setting of star operations is due to Kabbaj and Mimouni, who considered the $t$-operation \cite{KM1, KM2, KM3, KM4}. Then
Halter-Koch, in the language of ideal systems, introduced Clifford $\ast$-regularity for star operations of finite type \cite{HK}.
Finally, we deepened the  study of stability and Clifford regularity with respect to star operations in \cite{GP1, GP2}. 

 We note that Clifford $w$-regularity implies Clifford $t$-regularity, but it is a  stronger property. For example, while any Noetherian Clifford $w$-regular domain has $t$-dimension one, there are Noetherian $t$-regular domains of $t$-dimension greater than two \cite[Section 3]{GP2}.
 
Stability and Clifford regularity with respect to the $w$-operation are defined in the following way. Set as usual $E(J):=(J:J)$, for each  $J\in \mc F(D)$.
It is easy to
see that $E(I^w)^w=E(I^w)$, for each $I\in \mc F(D)$. Thus the
restriction of $w$ to the set of nonzero ideals of
$E(I^w)$ is a star operation on $E(I^w)$,
denoted by $\dot{w}:=w_{\vert_E}$.

 We say that
a nonzero ideal $I$ of $D$ is \emph{$w$-stable} if $I^w$ is
$\dot{w}$-invertible in $E(I^w)$ and that $D$ is
\emph{$w$-stable}  if each ideal of $D$ is $w$-stable.

We also say that $D$ is \emph{Clifford $w$-regular} if the $w$-class semigroup $\mc S^w(D):=\mc F^w(D)/\mc P(D)$  is a Clifford semigroup, i.e., for each nonzero ideal $I$, the class $[I^w]\in \cS^w(D)$ is vN-regular. This is equivalent to saying that $I^w$ is vN-regular in $\F^w(D)$, that is $I^w=(I^2J)^w$, for some nonzero ideal $J$ of $D$; in this case, we also have $I^w=(I^2(E(I^w):I))^w=(I^2(I^w:I^2))^w$ \cite[Lemma 1.2]{GP1}.
 If  $I^w$ is vN-regular in $\F^w(D)$, we say for short  that $I$ is  \emph{$w$-regular}. Clearly, a $w$-stable ideal is $w$-regular and so a $w$-stable domain is Clifford $w$-regular.

 Finally, recall that stable and Clifford regular domains are DW-domains \cite[Corollary 1.11]{GP}, \cite[Corollary 1.7]{GP1}; thus polynomial rings over a domain that is not a field are never stable or Clifford regular (Theorem \ref{th1}).
Here we are interested in the transfer of $w$-stability and Clifford $w$-regularity to polynomial rings and Nagata rings.

We start by observing that a necessary condition for $D[X]$ being Clifford $w$-regular  or $w$-stable is that $D$ has the same property.

 \begin{proposition}\label{stabledown}
    If $D[X]$ is a Clifford $w$-regular (resp., $w$-stable) domain, then $D$ is a Clifford $w$-regular (resp., $w$-stable) domain.
    \end{proposition}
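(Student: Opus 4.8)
The plan is to take an arbitrary nonzero ideal $I$ of $D$, pass to the extension $I[X]$ in $D[X]$, apply the hypothesis there, and descend the resulting witness back to $D$ by intersecting with $K$. Write $w'$ for the $w$-operation on $D[X]$. Two preliminary identities will carry most of the weight. First, $I[X]^{w'}=I^w[X]$ by the second statement of Lemma~\ref{lemmapoli}(1). Second, setting $T:=E(I^w)=(I^w:I^w)$, a nonzero fractional ideal of $D$, Lemma~\ref{lemmapoli}(1) (applied with $J=I=I^w$) gives
$$E(I[X]^{w'})=(I^w[X]:I^w[X])=(I^w:I^w)[X]=T[X].$$
Finally, the descent is furnished by the elementary remark that for any nonzero fractional ideal $A$ of $D$ the extended ideal $A[X]=AD[X]$ consists exactly of the polynomials with all coefficients in $A$, whence $A[X]\cap K=A$; so $A\mapsto A[X]$ is injective on fractional ideals. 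After these identities are in place, each of the two cases is a formal manipulation of extended ideals.

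For Clifford $w$-regularity, I would assume $D[X]$ is Clifford $w$-regular, so that $I[X]$ is $w'$-regular. By the explicit form of regularity recalled before the statement (\cite[Lemma~1.2]{GP1}, applied in $D[X]$),
$$I[X]^{w'}=\big((I[X])^2\,(E(I[X]^{w'}):I[X])\big)^{w'}.$$
Here $(I[X])^2=I^2[X]$, and using the computation of $E(I[X]^{w'})$ together with Lemma~\ref{lemmapoli}(1),
$$(E(I[X]^{w'}):I[X])=(T[X]:I[X])=(T:I)[X]=(E(I^w):I)[X].$$
Multiplying extended ideals and taking $w'$-closures (again Lemma~\ref{lemmapoli}(1)) would give $I^w[X]=\big(I^2(E(I^w):I)\big)^{w}[X]$. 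Intersecting with $K$ then yields $I^w=(I^2(E(I^w):I))^w$, which is exactly a regularity witness for $I$; as $I$ is arbitrary, $D$ is Clifford $w$-regular.

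For $w$-stability, I would assume $D[X]$ is $w$-stable, so $I[X]^{w'}$ is $\dot{w'}$-invertible in $E(I[X]^{w'})=T[X]$. Since $\dot{w'}$ is by definition the restriction of $w'$ to the ideals of $T[X]$, this invertibility reads
$$\big(I[X]^{w'}(T[X]:I[X]^{w'})\big)^{w'}=T[X].$$
Exactly as above, $(T[X]:I^w[X])=(T:I^w)[X]$, so the product inside is $(I^w(T:I^w))[X]$ and its $w'$-closure is $(I^w(T:I^w))^w[X]$ by Lemma~\ref{lemmapoli}(1). Hence $(I^w(T:I^w))^w[X]=T[X]$, and intersecting with $K$ gives $(I^w(E(I^w):I^w))^w=E(I^w)$. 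Since $\dot{w}$ is $w$ restricted to $E(I^w)$, this is precisely the statement that $I^w$ is $\dot{w}$-invertible in $E(I^w)$, i.e. $I$ is $w$-stable; thus $D$ is $w$-stable.

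The only genuinely delicate point I anticipate is the bookkeeping around the restricted star operations: one must verify $E(I[X]^{w'})=E(I^w)[X]$ and note that the restricted operation $\dot{w'}$ on this ring is literally $w'$ restricted, so that Lemma~\ref{lemmapoli}(1) legitimately applies to the colon ideal $(T[X]:I^w[X])$ and to the ensuing $w'$-closure. Once that is clarified, every remaining step is formal and the descent $A[X]\cap K=A$ closes both arguments, so I expect no further obstacle.
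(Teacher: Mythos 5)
Your proposal is correct and is essentially the paper's own proof: the paper dismisses the statement as "an easy application of Lemma \ref{lemmapoli}, (1) and (2)," and your argument is precisely that application, written out in full (extend $I$ to $I[X]$, use the colon and $w$-closure compatibilities of Lemma \ref{lemmapoli}(1) to see that the canonical regularity/invertibility witnesses are extended ideals, then descend via $A[X]\cap K=A$). The identity $E(I[X]^{w'})=E(I^w)[X]$ and the observation that $\dot{w'}$ is literally $w'$ restricted are exactly the bookkeeping the paper leaves to the reader, and you handle them correctly.
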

    \begin{proof}
    It is an easy application of Lemma \ref{lemmapoli}, (1) and (2).
\end{proof}

The following proposition shows that Clifford $w$-regularity and $w$-stability of polynomial rings and Nagata rings depend on the non-extended ideals. On the other hand, recall that each integral ideal of $\Na(D,v)$ is extended from $D$ if and only if $D$ is a P$v$MD \cite[Theorem 3.1]{K}.

\begin{lemma} \label{flat} Let $D\sub T$ be a flat extension of domains. If $I$ is a $w$-regular (resp., $w$-stable) ideal of $D$, $IT$ is a  $w$-regular (resp., $w$-stable) ideal of $T$.
\end{lemma}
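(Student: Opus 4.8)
The plan is to isolate a single base-change fact about the $w$-operation and then read off both cases from it. Write $w'$ for the $w$-operation of $T$. The key claim is that for every nonzero ideal $A$ of $D$ one has $A^w T \sub (AT)^{w'}$, and hence $(A^w T)^{w'} = (AT)^{w'}$. To prove this I would use the colon description $A^w = \bigcup\{(A:J)\,;\; J \text{ finitely generated}, (D:J)=D\}$ recalled in the Preliminaries. Given $x \in A^w$, choose a finitely generated $J=(a_1,\dots,a_n)$ with $(D:J)=D$ and $xJ \sub A$. Since $T$ is flat over $D$ and $J$ is finitely generated, extension commutes with the finite intersection $(D:J)=\bigcap_i a_i^{-1}D$, giving $(T:JT)=(D:J)T=DT=T$; as moreover $x(JT)=(xJ)T \sub AT$, the same description of $w'$ yields $x \in (AT:JT) \sub (AT)^{w'}$. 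Because $(AT)^{w'}$ is a $T$-module this gives $A^w T \sub (AT)^{w'}$, and the reverse $w'$-inclusion is automatic from $A \sub A^w$.

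For the $w$-regular case this is now immediate. If $I$ is $w$-regular, choose a nonzero ideal $J$ of $D$ with $I^w=(I^2J)^w$. Applying the key claim first to $A=I$ and then to $A=I^2J$ gives
\[ (IT)^{w'}=(I^wT)^{w'}=((I^2J)^wT)^{w'}=((I^2J)T)^{w'}=((IT)^2(JT))^{w'}, \]
so that $IT$ is $w$-regular in $T$, with witness ideal $JT$.

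For the $w$-stable case I would transfer the whole situation to the overring $E:=E(I^w)=(I^w:I^w)$. Since $T$ is flat over $D$, base change gives that $E\otimes_D T$ is $E$-flat, and the inclusion $E\hookrightarrow K$ together with flatness shows $E\otimes_D T\cong ET$ inside $\op{Frac}(T)$; hence $E\sub ET$ is again a flat extension and the key claim applies verbatim over $E$. Writing $\dot w=w_{\vert_E}$, the hypothesis is $(I^w(E:I^w))^{\dot w}=E$. Running the flat analogue of Lemma \ref{lemmapoli}(2) over $E\sub ET$, the $\dot w$-invertibility of $I^w$ passes to $\widetilde{\dot w}$-invertibility of $I^wT$ over $ET$, where $\widetilde{\dot w}$ is the $w$-operation induced on $ET$. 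Using the key claim to identify $(I^wT)^{w'}=(IT)^{w'}$ and matching $\widetilde{\dot w}$ with the restriction of $w'$ to $ET$, I obtain that $(IT)^{w'}$ is invertible over $ET$ in the $w$-sense. Since a $w$-invertible ideal over a ring forces that ring to be its own endomorphism ring, it follows that $E((IT)^{w'})=ET$ and that $\dot{w'}=w'_{\vert_{ET}}$; hence $(IT)^{w'}$ is $\dot{w'}$-invertible in $E((IT)^{w'})$, i.e. $IT$ is $w$-stable.

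The main obstacle is not the base-change fact but the matching of star operations in the $w$-stable case: one must verify that the operation $w'$ on $T$, its restriction to the overring $ET$, and the operation $\widetilde{\dot w}$ induced from $\dot w=w_{\vert_E}$ all agree on the ideals that arise, so that the $w$-closures of $I^wT$, of $IT$, and of the relevant products genuinely coincide. Establishing these compatibilities, and with them the base change of the endomorphism ring $E((IT)^{w'})=E(I^w)T$, is the delicate part; once it is in place, the $\dot w$-invertibility simply carries over along the flat extension $E\sub ET$.
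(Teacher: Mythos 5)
Your key claim is correct and correctly proved: for a flat extension and $J$ finitely generated, $(D:J)T=(T:JT)$, whence $A^wT\sub (AT)^{w'}$ and $(A^wT)^{w'}=(AT)^{w'}$ for every nonzero (fractional) ideal $A$ of $D$. This is exactly the statement that a flat extension is \emph{$w$-compatible}, which is the fact the paper itself invokes (its proof is a one-line citation of \cite[Lemma 2.4]{GP1} together with this compatibility). Your treatment of the $w$-regular case is then complete and correct, and is essentially the computation hidden in the cited lemma.

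The $w$-stable case, however, has a genuine gap. The whole argument rests on identifications you assert but never prove: that the operation $\widetilde{\dot w}$ ``induced'' on $ET$ agrees with the restriction of $w'$ on the relevant ideals, that the corresponding closure of $I^wT$ is $(IT)^{w'}$, and that $E((IT)^{w'})=ET$; your own closing paragraph concedes these are ``the delicate part'' and leaves them unestablished. Note also that the last identity cannot hold as stated in general, since $E((IT)^{w'})$ is always $w'$-closed, so the correct target is $(ET)^{w'}$; and the inference ``a $w$-invertible ideal forces the ring to be its own endomorphism ring, hence $E((IT)^{w'})=ET$'' is circular, because to know that $(IT)^{w'}$ is invertible over $ET$ in a sense that computes $E((IT)^{w'})$ you already need precisely those identifications. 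The gap can be closed using only your key claim, with no matching of star operations. Write $E=(I^w:I)$ and $A=I(E:I)$, so that $w$-stability of $I$ reads $A^w=E$, and the key claim gives $(AT)^{w'}=(ET)^{w'}$. Set $E'':=((IT)^{w'}:(IT)^{w'})=E((IT)^{w'})$. First, $ET\sub E''$, since $ET\cdot IT\sub I^wT\sub (IT)^{w'}$. Next, $E''\cdot AT=(E''\cdot IT)(E:I)T\sub (IT)^{w'}(E:I)T\sub (IT(E:I)T)^{w'}=(ET)^{w'}$; since $1\in E=A^w$ gives $1\in (AT)^{w'}$, it follows that $E''\sub E''(AT)^{w'}\sub (E''\cdot AT)^{w'}\sub (ET)^{w'}$, hence $E''=(ET)^{w'}$. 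Finally, $(E:I)T\sub (E'':IT)$, so $((IT)(E'':IT))^{w'}\supseteq (AT)^{w'}=(ET)^{w'}=E''$, while the reverse inclusion is trivial because $E''$ is $w'$-closed; this equality is exactly the invertibility of $(IT)^{w'}$ in $E''$ with respect to the restriction of $w'$, i.e., $IT$ is $w$-stable.
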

\begin{proof} This follows from \cite[Lemma 2.4]{GP1}, because a flat extension is $w$-compatible.
\end{proof}

\begin{proposition}\label{extended} If $D$ is a Clifford $w$-regular (resp., $w$-stable) domain, each extended ideal of $D[X]$ is $w$-regular (resp.,
 $w$-stable) and each extended ideal of $\Na(D,v)$ is vN-regular (resp., stable).
\end{proposition}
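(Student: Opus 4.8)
The plan is to deduce both assertions from the flatness statement in Lemma \ref{flat}, combined with the fact that both $D[X]$ and $\Na(D,v)$ are flat $D$-algebras.

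First I would record the flatness. The ring $D[X]$ is free, hence flat, over $D$, and $\Na(D,v)=D[X]_{N(v)}$ is a localization of $D[X]$, hence flat over $D[X]$; composing, $\Na(D,v)$ is a flat extension of $D$. Since $D$ is Clifford $w$-regular (resp.\ $w$-stable), every nonzero ideal $I$ of $D$ is $w$-regular (resp.\ $w$-stable). Applying Lemma \ref{flat} with $T=D[X]$ shows that the extended ideal $I[X]=ID[X]$ is $w$-regular (resp.\ $w$-stable) in $D[X]$, and applying it with $T=\Na(D,v)$ shows that $I\Na(D,v)$ is $w$-regular (resp.\ $w$-stable) in $\Na(D,v)$. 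This already settles the polynomial-ring half of the statement.

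For the Nagata-ring half, the only remaining point is to translate $w$-regularity and $w$-stability inside $\Na(D,v)$ into the plain notions. Here I would invoke Proposition \ref{Nagata}(5): $\Na(D,v)$ is a DW-domain, so its $w$-operation is the identity and every nonzero fractional ideal is a $w$-ideal. Consequently, for an ideal $A$ of $\Na(D,v)$ one has $A^w=A$, the class semigroup $\cS^w(\Na(D,v))$ coincides with $\cS(\Na(D,v))$, and $w$-regularity of $A$ is exactly vN-regularity of $A$ in $\F(\Na(D,v))$. For $w$-stability, I would observe that the auxiliary star operation $\dot w=w|_{E(A)}$ on the overring $E(A)=(A:A)$ is computed using the $w$-operation of $\Na(D,v)$, which is the identity; hence $\dot w$ is the identity on ideals of $E(A)$ and $\dot w$-invertibility coincides with ordinary invertibility, so $w$-stability of $A$ reduces to plain stability. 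Taking $A=I\Na(D,v)$ then yields that $I\Na(D,v)$ is vN-regular (resp.\ stable).

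The argument is essentially a short chain of invocations, and I do not expect a genuine obstacle. The only point requiring a moment of care is the reduction of $w$-stability to stability inside $\Na(D,v)$: one must check that the restricted operation $\dot w$ collapses to the identity, which follows because the ambient $w$-operation is trivial on the DW-domain $\Na(D,v)$. Everything else is a direct appeal to Lemma \ref{flat} and to the flatness of the two extensions.
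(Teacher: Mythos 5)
Your proof is correct and follows exactly the paper's route: the paper likewise deduces the statement from Lemma \ref{flat} (flatness of $D\subseteq D[X]$ and $D\subseteq \Na(D,v)$) together with the DW-property of $\Na(D,v)$ from Proposition \ref{Nagata}. You merely spell out the reduction of $w$-regularity and $w$-stability to the plain notions in the DW-domain $\Na(D,v)$, which the paper leaves implicit.
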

 \begin{proof}  It follows from Lemma \ref{flat}, because polynomial rings and localizations are flat extensions, and from the fact that $\Na(D,v)$ is a DW-domain (Proposition \ref{Nagata}).  \end{proof}

 The study of ($w$-)stability can be reduced to the local case. In fact a domain $D$ is ($w$-)stable if and only if $D_M$ is stable, for each ($t$-)maximal ideal $M$, and $D$ has ($t$-)finite character \cite[Corollary 1.10]{GP}. If $D$ is Clifford ($w$-)regular, then $D_M$ is Clifford regular for each ($t$-)maximal ideal $M$ \cite[Corollary 2.13]{GP1} and $D$ has ($t$-)finite character \cite[Theorem 5.2]{GP1}, but it is not known if the converse is true in general. However, the converse holds if $D$ is integrally closed or if each nonzero ($t$-)prime ideal of $D$ is contained in a unique ($t$-)maximal ideal (e.g., $D$ has ($t$-)dimension one). This follows from more general results proved in \cite{GP1} in the setting of star operations spectral and of finite type. We give below a direct proof.

Recall that  a Pr\"ufer domain (resp., a P$v$MD) is called \emph{strongly discrete} if $P\neq P^2$ for each prime (resp., $t$-prime) ideal $P$.
A domain is integrally closed and Clifford $w$-regular (resp., $w$-stable) if and only if it is a P$v$MD (resp., a strongly discrete P$v$MD) with $t$-finite character \cite[Corollary 4.5]{GP1}, \cite[Therem 2.9]{GP}. Thus, for $w=d$, an integrally closed Clifford regular (resp., stable) domain is precisely a Pr\"ufer domain (resp., a strongly discrete Pr\"ufer domain) with finite character \cite[Theorem 4.5]{B3}, \cite[Theorem 4.6]{O3}.

If each prime ideal of $D$ is contained in a unique ($t$-)maximal ideal and $D$ has
($t$-)finite character,
$D$ is called \emph{$h$-local} (\emph{weakly Matlis}).

\begin{theorem} \label{FC3}  \cite[Proposition 5.4 and Theorem 5.6]{GP1} Let $D$ be an integral domain. Assume that
 {\rm (a)} $D$ is integrally closed or  {\rm (b)} each nonzero ($t$-)prime ideal of $D$ is contained in a unique ($t$-)maximal ideal (e.g., $D$ has ($t$-)dimension one).  The following conditions are equivalent:
\begin{enumerate}
\item[(i)] $D$ is Clifford ($w$-)regular;
\item[(ii)] $D_M$ is Clifford regular for each $M\in(t\op{-)Max}(D)$, and $D$ has ($t$-)finite character.
\end{enumerate}
\end{theorem}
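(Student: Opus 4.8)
The plan is to treat the two implications separately. The forward implication (i) $\ra$ (ii) requires neither (a) nor (b): it is exactly the conjunction of the two facts recalled just before the statement, namely that the localizations of a Clifford $w$-regular domain at $t$-maximal ideals are Clifford regular \cite[Corollary 2.13]{GP1} and that such a domain has $t$-finite character \cite[Theorem 5.2]{GP1}. All the substance is in proving (ii) $\ra$ (i).

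For (ii) $\ra$ (i), I would use that $D$ is Clifford $w$-regular as soon as every nonzero ideal $I$ satisfies $I^w=(I^2(I^w:I^2))^w$, this being the explicit form of vN-regularity of $[I^w]$ noted above. Since $I^2(I^w:I^2)\sub I^w$ the inclusion $(I^2(I^w:I^2))^w\sub I^w$ is automatic, and as both sides are $w$-ideals it suffices to prove $ID_M\sub (I^2(I^w:I^2))D_M$ for every $M\in\tmax(D)$. Fixing a nonzero $a\in I$, $t$-finite character forces $ID_M\ne D_M$ for only finitely many $t$-maximal ideals $M_1,\dots,M_n$; for every other $M$ one has $ID_M=D_M$ and the required inclusion is trivial, since then $(I^2(I^w:I^2))D_M=(I^w:I^2)D_M\supseteq D_M=ID_M$. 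At each $M_i$, the local Clifford regularity of $D_{M_i}$ gives $ID_{M_i}=(ID_{M_i})^2(ID_{M_i}:I^2D_{M_i})$, so everything comes down to the colon-localization inclusion $(ID_{M_i}:I^2D_{M_i})\sub (I^w:I^2)D_{M_i}$.

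Under hypothesis (a) this last point can be bypassed entirely. Indeed $D_M$ is then integrally closed and Clifford regular for \emph{every} $t$-maximal $M$, so by the description of integrally closed Clifford regular domains as Pr\"ufer domains with finite character \cite[Theorem 4.5]{B3} each $D_M$ is a local Pr\"ufer domain, i.e.\ a valuation domain. Hence $D$ is a P$v$MD, and, having $t$-finite character, it is Clifford $w$-regular by \cite[Corollary 4.5]{GP1}. This disposes of case (a) without any colon computation.

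Under hypothesis (b), however, the colon-localization must be established directly, and this is the step I expect to demand the most care. Here $D$ is weakly Matlis, and the inclusion $(I^w:I^2)D_{M_i}\supseteq (ID_{M_i}:I^2D_{M_i})$ amounts to lifting a local colon element to a global one: given $x$ with $xI^2D_{M_i}\sub ID_{M_i}$, I must produce $s\notin M_i$ with $sxI^2\sub I^w$, that is $sxI^2D_N\sub ID_N$ for all $N\in\tmax(D)$. The choice of $s$ is precisely where the weakly Matlis hypothesis enters: because distinct $t$-maximal ideals share no common nonzero $t$-prime, the $t$-localizations are independent, so one can pick $s$ that is a unit at $M_i$, corrects the behaviour at the remaining finitely many exceptional primes $M_j$, and lies in $\bigcap_{N\notin\{M_1,\dots,M_n\}}D_N$. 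Carrying out this simultaneous approximation—equivalently, showing that the colon of $w$-ideals localizes correctly in a weakly Matlis domain—is the crux of the argument and the main obstacle. The parallel statements for the $d$-operation then follow by reading $t=d$ and $\tmax(D)=\Max(D)$ throughout.
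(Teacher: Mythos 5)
Your proposal reproduces the paper's own proof almost step for step: (i) $\ra$ (ii) by the two cited results from \cite{GP1}; case (a) by observing that each $D_M$ is a local, integrally closed, Clifford regular domain, hence a valuation domain, so that $D$ is a P$v$MD with $t$-finite character and \cite[Corollary 4.5]{GP1} applies (the paper quotes \cite[Theorem 3]{BS} instead of \cite[Theorem 4.5]{B3}, which is immaterial); and case (b) by reducing vN-regularity of each $I^w$ to a colon-localization statement at the $t$-maximal ideals. Your reductions in case (b) are correct: the passage to the finitely many exceptional primes is justified by $t$-finite character, and the non-exceptional primes are indeed harmless once one notes that $I$ may be taken integral, so that $1\in(I^w:I^2)$.

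The one place where your write-up is not a proof is exactly the step you label ``the crux of the argument and the main obstacle'': you never establish the inclusion $(ID_{M_i}:I^2D_{M_i})\sub (I^w:I^2)D_{M_i}$, only sketching a simultaneous-approximation strategy. In the paper this is not an obstacle at all: it is precisely a known result of Anderson and Zafrullah, \cite[Corollary 5.2]{AZ}, which says that in a weakly Matlis domain (i.e., when $D=\bigcap_{M\in\tmax(D)}D_M$ is an independent, locally finite intersection) one has $(I:I^2)_M=(I_M:I^2_M)$ for every $w$-ideal $I$ and every $M\in\tmax(D)$. With that citation your argument closes immediately; the paper then simply computes $(I^2(I:I^2))^w=\bigcap_{M}I^2_M(I_M:I^2_M)=\bigcap_{M}I_M=I$ for a $w$-ideal $I$. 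So the approach is the same, the structure is sound, and the gap you flag is filled by a single reference rather than by the approximation argument you anticipated having to carry out (your sketch is, in substance, how that cited result is proved, but as written it remains unexecuted).
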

\begin{proof} It is enough to prove the theorem for Clifford $w$-regularity.

(i) $\ra$ (ii) is always true. In fact, when $D$ is Clifford $w$-regular, $D_M$ is Clifford regular for each $M\in (t\op{-)Max}(D)$ by \cite[Proposition 2.8]{B3}, and $D$ has  $t$-finite character by \cite[Theorem 5.2]{GP1}.

(ii) $\ra$ (i) Assume (a). Since $D_M$ is integrally closed and Clifford regular, it is a valuation domain  \cite[Theorem 3]{BS}. Thus $D$ is Clifford $w$-regular by \cite[Corollary 4.5]{GP1}.

Assume (b). By definition, $D$ is weakly Matlis. Hence $(I:I^2)_M=(I_M:I^2_M)$, for each  $w$-ideal $I$ and $t$-maximal ideal $M$ \cite[Corollary 5.2]{AZ}. Then, since $D$ is $t$-locally Clifford regular,
$$(I^2(I:I^2))^w=\bigcap_{M\in t\op{-Max}(D)}I^2(I:I^2)_M=\bigcap_{M\in t\op{-Max}(D)}I^2_M(I_M:I^2_M)=\bigcap_{M\in t\op{-Max}(D)}I_M=I.$$
We conclude that each nonzero ideal of $D$ is Clifford $w$-regular.
\end{proof}

\begin{corollary} \label{wM} \cite[Corollaries 2.16 and  2.17]{GP1} Assume that  $D$ is  an $h$-local (resp.,  weakly Matlis) domain. Then $D$ is  Clifford ($w$-)regular  if and only if $D_M$ is Clifford regular, for each ($t$-)maximal ideal $M$ of $D$. \end{corollary}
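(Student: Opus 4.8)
The plan is to obtain this statement as an immediate consequence of Theorem \ref{FC3}, the only work being to verify that the hypotheses of that theorem are met and that the finite-character clause of condition (ii) becomes redundant under the standing assumption.

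First I would treat the weakly Matlis case. By definition, a weakly Matlis domain satisfies two conditions at once: each nonzero $t$-prime ideal of $D$ is contained in a unique $t$-maximal ideal, and $D$ has $t$-finite character. The first of these is precisely hypothesis (b) of Theorem \ref{FC3}. Hence Theorem \ref{FC3} applies and yields the equivalence of (i) $D$ is Clifford $w$-regular and (ii) $D_M$ is Clifford regular for each $M\in\tmax(D)$ together with $D$ having $t$-finite character. Next I would observe that the $t$-finite character required in (ii) is already guaranteed by the weakly Matlis hypothesis. Therefore (ii) collapses to the single requirement that $D_M$ be Clifford regular for every $t$-maximal ideal $M$, and the equivalence (i) $\lra$ (ii) becomes exactly the asserted biconditional.

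The $h$-local case is handled identically, applying the $w=d$ instance of Theorem \ref{FC3}: an $h$-local domain satisfies hypothesis (b) (each nonzero prime is contained in a unique maximal ideal) and has finite character, so once more condition (ii) reduces to local Clifford regularity. Since the result is a direct specialization of Theorem \ref{FC3}, I do not anticipate any genuine obstacle. The only point requiring care is the bookkeeping of how the weakly Matlis (resp.\ $h$-local) definition simultaneously supplies hypothesis (b) and renders the $t$-finite (resp.\ finite) character clause of (ii) vacuous; once this splitting is noted, the proof is immediate.
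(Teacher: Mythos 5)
Your proposal is correct and is exactly the derivation the paper intends: the corollary is stated as an immediate specialization of Theorem \ref{FC3}, with the weakly Matlis (resp.\ $h$-local) hypothesis supplying both condition (b) of that theorem and the ($t$-)finite character clause of its condition (ii), which therefore becomes redundant. No gaps; the bookkeeping you describe is the whole proof.
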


\begin{remark} \rm In the proof of (ii) $\ra$ (i) of Theorem \ref{FC3}, the fact that $D$ is $h$-local (resp., weakly Matlis) is used only because this implies that $(I:I^2)_M=(I_M:I^2_M)$ for every ($w$-)ideal $I$ and ($t$-)maximal ideal $M$. But this is true also in other cases, for example when $I^2$ is finitely generated (resp., $w$-finite).

By a standard argument, the condition that each $w$-ideal is $w$-finite is equivalent to the ascending chain condition on integral $w$-ideals: a domain with this condition is called a \emph{strong Mori domain}. Clearly Noetherian domains are strong Mori. More precisely, a domain is strong Mori if and only if $D_M$ is Noetherian, for each $M\in \tmax(D)$ and $D$ has $t$-finite character \cite[Theorem 1.9]{W}.

It follows that a strong Mori domain $D$ is Clifford $w$-regular if and only if $D_M$ is Clifford regular, for each $M\in \tmax(D)$ \cite[Proposition 2.5(2)]{GP2}.
\end{remark}

Taking in account Theorem \ref{FC3} and Corollary \ref{wM}, to reduce ourselves to consider the local case, we now want to establish when a polynomial ring over a Clifford $w$-regular domain has $t$-finite character or is weakly Matlis.

\begin{lemma}\label{finchar} \cite[Lemma 2.1]{GHP} Let $D$ be an integral domain. The following conditions are equivalent:
	\begin{itemize}
\item[(i)] $D$ has $t$-finite character;
\item[(ii)] $D[X]$ has $t$-finite character;
\item[(iii)] $\Na(D, v)$ has finite character.
		\end{itemize}
\end{lemma}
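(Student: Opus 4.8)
The plan is to translate the three finite-character conditions through the two structure theorems already at our disposal: Lemma \ref{lemmapoli}(3), which describes the $t$-maximal ideals of $D[X]$, and Proposition \ref{Nagata}(2), which identifies $\Max(\Na(D,v))$ with $\tmax(D)$. The unifying observation I would record first is that, for a nonzero $g\in D[X]$ and $M\in\tmax(D)$, one has $g\in M[X]$ if and only if $c(g)\sub M$, and likewise $g\in M\Na(D,v)$ if and only if $c(g)\sub M$. The first equivalence is immediate from $M[X]=MD[X]$. For the second, writing $g\in M\Na(D,v)$ as $sg\in M[X]$ for some $s\in N(v)$ and applying the Dedekind--Mertens content formula gives $c(s)^{m+1}c(g)\sub M$ for some $m$; since $c(s)$ is finitely generated we have $c(s)^t=c(s)^v=D$, so $c(s)\nsub M$, and primality of $M$ then forces $c(g)\sub M$.

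For $(i)\lra(iii)$ I would use that $M\mapsto M\Na(D,v)$ is a bijection from $\tmax(D)$ onto $\Max(\Na(D,v))$. A nonzero $a\in D$, viewed as a constant, lies in $M\Na(D,v)$ exactly when $a\in M$, so finite character of $\Na(D,v)$ immediately confines each such $a$ to finitely many $t$-maximal ideals, giving $(iii)\ra(i)$. For $(i)\ra(iii)$, a nonzero $g\in\Na(D,v)$ can be written $g=g_1/g_2$ with $g_i\in D[X]$; since the maximal ideals containing $g$ are among those containing $g_1$, it suffices to bound the latter. By the content equivalence these correspond to the $M\in\tmax(D)$ with $c(g_1)\sub M$; choosing a single nonzero coefficient $a$ of $g_1$, every such $M$ contains $a$, so $t$-finite character of $D$ gives finiteness.

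For $(i)\lra(ii)$ I would split the $t$-maximal ideals of $D[X]$ according to Lemma \ref{lemmapoli}(3) into the extended ones $M[X]$ and the uppers to zero $P_f:=fK[X]\cap D[X]$. For a nonzero $g\in D[X]$, the extended $t$-maximal ideals containing it are the $M[X]$ with $c(g)\sub M$, finite in number by the same single-coefficient argument, while the uppers to zero containing it are the $P_f$ with $f\mid g$ in $K[X]$, and these are finite because $g$ has only finitely many irreducible factors in $K[X]$. This yields $(i)\ra(ii)$. Conversely, a nonzero constant $a\in D$ lies in no upper to zero (since $P_f\cap D=(0)$) and lies in $M[X]$ exactly when $a\in M$; hence finite character of $D[X]$ restricts to finite character of $D$, giving $(ii)\ra(i)$.

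The main obstacle is the content computation underlying the equivalence $g\in M\Na(D,v)\lra c(g)\sub M$: one must invoke the Dedekind--Mertens lemma rather than a naive Gauss inclusion, and then use that $c(s)^v=D$ is inherited by the $t$-operation precisely because $c(s)$ is finitely generated. A second, more conceptual point to keep in view is that finite character is a statement about \emph{elements}, not ideals; so after reducing membership to the condition $c(g)\sub M$ one cannot appeal to a finite-character property of the content ideal directly, and the reduction to a single nonzero coefficient is exactly what converts the ideal-theoretic condition back into the elementwise hypothesis on $D$.
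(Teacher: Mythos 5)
Your proof is correct in substance, but there is nothing internal to compare it against: the paper does not prove this lemma, it simply quotes it from \cite[Lemma 2.1]{GHP}. Your self-contained argument is the natural one behind that citation. You split $\tmax(D[X])$ via Lemma \ref{lemmapoli}(3) into extended ideals and uppers to zero, dispose of the uppers to zero by unique factorization in $K[X]$, and transfer finite character through the membership criterion $g\in M[X]\Leftrightarrow c(g)\sub M\Leftrightarrow g\in M\Na(D,v)$, reducing the ideal condition $c(g)\sub M$ to a single nonzero coefficient; the equivalence with (iii) then rides on the bijection $\Max(\Na(D,v))=\{M\Na(D,v)\,;\,M\in\tmax(D)\}$ of Proposition \ref{Nagata}(2). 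This is a perfectly valid route, and it uses only results already stated in the paper.

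Two refinements. First, in (ii) $\Rightarrow$ (i) there is an implicit step you should make explicit: Lemma \ref{lemmapoli}(3) says that every $t$-maximal ideal of $D[X]$ with nonzero contraction has the form $M[X]$ with $M\in\tmax(D)$, but your argument also needs the converse, namely that \emph{every} $M\in\tmax(D)$ gives a $t$-maximal ideal $M[X]$ of $D[X]$; otherwise an element $a$ lying in infinitely many $t$-maximal ideals of $D$ would not obviously lie in infinitely many $t$-maximal ideals of $D[X]$. The converse follows in two lines from the results you already cite: $M[X]$ is a proper integral $t$-ideal of $D[X]$ by Lemma \ref{lemmapoli}(1), hence contained in some $t$-maximal $N$; since $N\cap D\supseteq M\neq(0)$, $N$ is not an upper to zero, so $N=(N\cap D)[X]$ with $N\cap D\in\tmax(D)$, and $t$-maximality of $M$ forces $N\cap D=M$, i.e.\ $N=M[X]$. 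Second, your appeal to Dedekind--Mertens, while correct, is stronger than needed: since $c(s)^v=D$, $c(s)$ is finitely generated, and $M$ is a $t$-ideal, one has $s\notin M[X]$, so the multiplicative set $N(v)$ is disjoint from the prime ideal $M[X]$; then $sg\in M[X]$ gives $g\in M[X]$ directly by primality, with no content formula required.
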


By Lemmas \ref{lemmapoli}  and \ref{finchar},  when $D$ is weakly Matlis, $D[X]$ is weakly Matlis if and only if each upper to zero is contained in a unique $t$-maximal ideal  \cite[Proposition 2.2]{GHP}. This condition is certainly satisfied if $D$ is a local domain whose
maximal ideal is a $t$-ideal or if each upper to zero
is a $t$-maximal ideal.  A domain such that each upper to zero
is a $t$-maximal ideal is called a \emph{UMT-domain} \cite{HZ}.

	        \begin{proposition} \label{c:umt} \cite[Corollary 2.3]{GHP} Assume that {\rm (a)}  $D$ is a local domain whose
maximal ideal is a $t$-ideal or {\rm (b)}  $D$ is a UMT-domain. The
following conditions are equivalent:
\begin{itemize}
\item[(i)] $D$ is weakly Matlis;
\item[(ii)] $D[X]$ is weakly Matlis;
\item[(iii)] $\Na(D, v)$  is $h$-local.
\end{itemize}
\end{proposition}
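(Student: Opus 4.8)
The plan is to peel off the finite-character part of each condition and reduce the remaining ``unique containment'' part to a single statement about height-one primes of $D$. Each of (i), (ii), (iii) asserts two things: a finite-character hypothesis and the requirement that every nonzero prime of the relevant type lie beneath a unique maximal ideal of the relevant type. The three finite-character hypotheses are equivalent by Lemma~\ref{finchar}, so it remains to compare the containment requirements. Here I would use the elementary observations that, in any domain, every nonzero prime contains a height-one prime, every height-one prime is a $t$-prime, and the (resp.\ $t$-)maximal ideals lying above a prime can only shrink as the prime grows. Consequently each containment requirement is equivalent to its restriction to height-one primes, and everything reduces to the single condition
$$(\ast)\qquad \text{each height-one prime of } D \text{ lies in a unique } t\text{-maximal ideal of } D.$$

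Next I would classify the height-one primes upstairs. A height-one prime of $D[X]$ with nonzero contraction to $D$ is of the form $\mathfrak p[X]$ with $\mathfrak p$ a height-one ($t$-)prime of $D$, while the remaining height-one primes are the uppers to zero. For the extended primes $\mathfrak p[X]$, Lemma~\ref{lemmapoli}(3) shows that the $t^\prime$-maximal ideals above $\mathfrak p[X]$ are exactly the $M[X]$ with $M\in\tmax(D)$ and $M\supseteq\mathfrak p$; and by Proposition~\ref{Nagata}(2),(3) the maximal ideals of $\Na(D,v)$ above $\mathfrak p\,\Na(D,v)$ correspond to the same set of $M$'s. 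Thus for these primes ``lies in a unique maximal'' reads, both in $D[X]$ and in $\Na(D,v)$, exactly as $(\ast)$ for $\mathfrak p$. This already gives the easy implication (ii)$\Rightarrow$(i): a height-one $t$-prime $\mathfrak p$ of $D$ lying in two $t$-maximal ideals would force $\mathfrak p[X]$ into two $t^\prime$-maximal ideals, against (ii), and finite character descends by Lemma~\ref{finchar}.

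The crux is the uppers to zero, and this is exactly where hypotheses (a) and (b) enter; I would show that under either hypothesis every upper to zero lies in a unique $t^\prime$-maximal ideal. Under (b) this is immediate: by definition of a UMT-domain each upper to zero is itself $t^\prime$-maximal (Lemma~\ref{lemmapoli}(3)), hence lies in the unique $t^\prime$-maximal ideal equal to itself. Under (a) the maximal ideal $M$ is the only $t$-maximal ideal of $D$, so $M[X]$ is the only extended $t^\prime$-maximal ideal; since distinct uppers to zero are incomparable height-one primes, a given upper to zero $Q$ is either contained in $M[X]$ (and then in no other $t^\prime$-maximal ideal, being non-maximal) or is itself $t^\prime$-maximal (and contained in no $M[X]$), so again uniquely. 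To transfer this to $\Na(D,v)$, note that an upper to zero survives in $\Na(D,v)$ precisely when it is contained in some $M[X]$, i.e.\ precisely when it is not $t^\prime$-maximal; for such a $Q$ the unique $t^\prime$-maximal ideal above it is an extended $M[X]$, whence $Q\,\Na(D,v)$ lies in the unique maximal ideal $M\,\Na(D,v)$.

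Putting the pieces together, under (a) or (b) the uppers to zero never obstruct the containment requirement in either $D[X]$ or $\Na(D,v)$, so the containment requirements of (ii) and (iii) both collapse to the extended case, which is $(\ast)$; combined with the equivalence of the finite-character hypotheses (Lemma~\ref{finchar}), this yields (i)$\Leftrightarrow$(ii)$\Leftrightarrow$(iii). For (i)$\Rightarrow$(ii) one may alternatively invoke directly \cite[Proposition~2.2]{GHP} quoted before the statement, together with the uppers-to-zero analysis above. I expect the main obstacle to be precisely this uppers-to-zero analysis: identifying which uppers to zero survive in $\Na(D,v)$ and checking that, under (a) or (b), none of them is forced into two distinct maximal ideals — equivalently, that the ``bad'' uppers to zero lying in several $M[X]$, which do survive and would break $h$-locality of $\Na(D,v)$, are excluded exactly by the two hypotheses.
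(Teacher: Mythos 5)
Your decomposition into ``finite character plus unique containment'', your classification of the height-one primes of $D[X]$, and your treatment of the uppers to zero under (a) and (b) are fine --- indeed the uppers-to-zero analysis is exactly the crux identified in the discussion preceding the statement, via \cite[Proposition 2.2]{GHP}. The genuine gap is your very first reduction: the ``elementary observation'' that in any domain every nonzero prime contains a height-one prime is \emph{false}, and with it the claim that each containment requirement is equivalent to its restriction to height-one primes. A valuation domain whose value group is $\bigoplus_{n\geq 1}\mathbb{Z}$ with reverse-lexicographic order has nonzero primes forming a chain $M_V\supset P_1\supset P_2\supset\cdots$ with $\bigcap_k P_k=(0)$, hence no height-one primes at all. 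Worse, the failure occurs inside hypothesis (b): take such a $V$ with residue field $\mathbb{Q}(t)$, let $W_1,W_2$ be two distinct DVRs of $\mathbb{Q}(t)$, let $V_i$ be the pullback of $W_i$ along $V\to V/M_V$, and put $D=V_1\cap V_2$. Then $D$ is a Pr\"ufer domain (hence a UMT-domain) with exactly two maximal ideals $N_1,N_2$ (hence with $t$-finite character, since $t=d$ on Pr\"ufer domains), and $D$ has no height-one primes whatsoever, so your condition $(\ast)$ holds vacuously; yet $M_V$ is a nonzero ($t$-)prime of $D$ contained in both $N_1$ and $N_2$, so $D$ is not weakly Matlis. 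Thus ``$t$-finite character $+\ (\ast)$'' is strictly weaker than (i), and the same defect undermines (ii) and (iii): the $t$-primes $\mathfrak{p}[X]$ with $\operatorname{ht}\mathfrak{p}\geq 2$, the uppers to nonzero primes, and the corresponding primes of $\Na(D,v)$ --- all of which the $h$-locality in (iii) must see --- are never reached by a height-one classification.

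The repair is to replace height-one primes by primes that always exist below a given nonzero prime and are still $t$-primes, namely minimal primes of nonzero principal ideals (the paper's preliminaries record that such primes are $\ast$-primes for every finite-type $\ast$): if a surviving nonzero prime of $D[X]$ lay in two ideals $M_1[X]$ and $M_2[X]$, then a minimal prime of a principal ideal inside it would be a $t$-prime of $D[X]$ lying in two $t^\prime$-maximal ideals. For the extended primes, drop the height restriction and argue directly: if $Q$ is a $t^\prime$-prime of $D[X]$ with $\mathfrak{p}:=Q\cap D\neq(0)$, then $\mathfrak{p}^t[X]=(\mathfrak{p}[X])^{t^\prime}\subseteq Q^{t^\prime}=Q$ by Lemma \ref{lemmapoli}(1), so $\mathfrak{p}$ is a $t$-prime of $D$; moreover every $t^\prime$-maximal ideal containing $Q$ contracts to a nonzero prime, hence equals some $M[X]$ with $M\supseteq\mathfrak{p}$ by Lemma \ref{lemmapoli}(3). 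So uniqueness for $Q$ upstairs is exactly uniqueness for $\mathfrak{p}$ downstairs, with no height hypothesis. With these two substitutions your skeleton --- finite character via Lemma \ref{finchar}, extended primes reduced to $D$, uppers to zero handled by (a)/(b) --- becomes correct, and it is then essentially the argument behind the cited \cite[Corollary 2.3]{GHP}.
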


It is worth noting that the hypotheses (a) and (b) are not necessary to prove the equivalence of conditions (ii) and (iii) of Proposition \ref{c:umt}.
Examples of weakly Matlis domains $D$ such that $D[X]$ is not weakly Matlis were given in \cite[Examples 2.5 and 2.6]{GHP}.

  \begin{proposition} \label{UMT} A Clifford $w$-regular domain is a UMT-domain.
\end{proposition}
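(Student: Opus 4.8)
The plan is to reduce the statement to a local computation and then invoke the known local description of UMT-domains. First I would use the always-valid implication (i)$\Rightarrow$(ii) of Theorem \ref{FC3}: if $D$ is Clifford $w$-regular, then $D_M$ is a (classical) Clifford regular domain for every $M\in\tmax(D)$. By the result recalled in the text, namely that Clifford regular domains are finitely stable \cite[Theorem 3.1]{B3}, each such $D_M$ is finitely stable.

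Next I would pass to integral closures. A finitely stable domain has Pr\"ufer integral closure (a theorem of Olberding, see \cite{O3}); hence $\overline{D_M}$ is a Pr\"ufer domain, that is, each $D_M$ is a quasi-Pr\"ufer domain, for every $M\in\tmax(D)$. Finally I would apply the local characterization of UMT-domains in its $t$-local form: $D$ is a UMT-domain if and only if $D_M$ is quasi-Pr\"ufer for every $t$-maximal ideal $M$, which is the localized version of the Houston--Zafrullah criterion \cite{HZ}, \cite{UMT}. Chaining the three steps yields that $D$ is a UMT-domain.

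As a sanity check on the last step, one can unwind it through Lemma \ref{lemmapoli}(3). An upper to zero $Q$ of $D[X]$ is a height-one prime, hence a $t$-prime; since $t$ is of finite type, if $Q$ is not $t$-maximal then $Q\subsetneq N$ for some $t$-maximal ideal $N$ of $D[X]$. By Lemma \ref{lemmapoli}(3), $N$ is either an upper to zero or of the form $M[X]$ with $M\in\tmax(D)$; the first case is impossible, as it would force $Q$ to have height $0$, so $Q\subsetneq M[X]$. Thus the failure of the UMT property is exactly an ``extended-ideal obstruction'', and whether an upper to zero sits inside some $M[X]$ is governed precisely by the local ring $D_M$, which is what makes the reduction to the quasi-Pr\"ufer behaviour of the $D_M$ legitimate.

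The main obstacle is that the two substantive inputs are external to the excerpt: the implication that finite stability forces a Pr\"ufer integral closure, and the reduction of the UMT property to $t$-maximal localizations. I would also note that the shortcut through the integrally closed case is unavailable here: an integrally closed Clifford $w$-regular domain is a P$v$MD, which is automatically a UMT-domain, but we do \emph{not} assume $D$ integrally closed, and this is exactly why the detour through the integral closures $\overline{D_M}$ is required.
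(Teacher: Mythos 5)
Your proof is correct and takes essentially the same route as the paper: localize at the $t$-maximal ideals to get that each $D_M$ is Clifford regular, deduce that each $D_M$ has Pr\"ufer integral closure, and conclude via the localized characterization of UMT-domains in \cite[Theorem 1.5]{UMT}. The only cosmetic difference is that the paper cites \cite[Proposition 2.3]{KM1} directly for the Pr\"ufer integral closure of a Clifford regular domain, whereas you unwind that citation through finite stability (Bazzoni) plus the theorem that finitely stable domains have Pr\"ufer integral closure.
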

\begin{proof} If $D$ is Clifford $w$-regular, $D_M$ is Clifford regular for each $M\in \tmax(D)$ \cite[Corollary 2.13]{GP1}.  Thus $D_M$ has Pr\"ufer integral closure \cite[Proposition 2.3]{KM1} and hence $D$ is a UMT-domain by \cite[Theorem 1.5]{UMT}. \end{proof}

\begin{corollary} \label{UMT1} Let  $D$ be a Clifford $w$-regular domain. Then:
\begin{itemize}
\item[(1)] If $D$ is weakly Matlis,  the polynomial ring $D[X]$ is weakly Matlis.
\item[(2)] If $D$ has  $t$-dimension one, $D$ is weakly Matlis and the polynomial ring $D[X]$ is weakly Matlis of $t$-dimension one.
\end{itemize}
\end{corollary}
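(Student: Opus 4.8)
The plan is to derive both parts from the machinery already assembled, treating Clifford $w$-regularity as a device that supplies exactly two ingredients: $t$-finite character and the UMT property. For part (1), the first move is to invoke Proposition \ref{UMT} to record that $D$, being Clifford $w$-regular, is a UMT-domain. This places us squarely under hypothesis (b) of Proposition \ref{c:umt}, so its equivalence (i) $\lra$ (ii) applies without modification. Since $D$ is assumed weakly Matlis, it follows immediately that $D[X]$ is weakly Matlis. Thus part (1) is nothing more than verifying that the hypotheses of Proposition \ref{c:umt} are satisfied.

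For part (2) I would proceed in two stages. First I would check that $D$ itself is weakly Matlis. The hypothesis that $D$ has $t$-dimension one forces every nonzero $t$-prime of $D$ to have height one; such a $t$-prime is then $t$-maximal, since if it were properly contained in a $t$-maximal ideal $N$ (itself a $t$-prime), then $N$ would have height at least two, contradicting $t$-dimension one. Consequently each nonzero $t$-prime is contained in a unique $t$-maximal ideal, namely itself, and the only remaining requirement for weak Matlis-ness is $t$-finite character. But this is precisely one of the two properties guaranteed by Clifford $w$-regularity (\cite[Theorem 5.2]{GP1}, as recalled before Theorem \ref{FC3}). Hence $D$ is weakly Matlis, and part (1) then yields that $D[X]$ is weakly Matlis as well.

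The second stage is to verify that $D[X]$ has $t$-dimension one, and this is the only step carrying real content. Here I would appeal to the classification of $t$-maximal ideals in Lemma \ref{lemmapoli}(3): each $t$-maximal ideal $N$ of $D[X]$ is either an upper to zero, which has height one, or of the form $M[X]$ with $M\in\tmax(D)$; in the latter case the standard height formula gives $\op{ht}(M[X])=\op{ht}(M)=1$, the final equality because $D$ has $t$-dimension one. So every $t$-maximal ideal of $D[X]$ has height one. Since $t$ is of finite type, every nonzero $t$-prime of $D[X]$ lies inside some $t$-maximal ideal; being a nonzero prime contained in a height-one prime, it must coincide with it and hence has height one. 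Therefore $D[X]$ has $t$-dimension one.

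I expect the $t$-dimension-one claim for $D[X]$ to be the main (indeed essentially the only) obstacle: everything else is a matter of invoking Proposition \ref{UMT}, Proposition \ref{c:umt}, and the $t$-finite character attached to Clifford $w$-regularity. The height computation is routine once the two cases are separated, but it is the one place where the structure theorem for $t$-maximal ideals of $D[X]$ (Lemma \ref{lemmapoli}(3)) genuinely does the work, since the extended primes $M[X]$ and the uppers to zero must be handled individually.
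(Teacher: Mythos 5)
Your part (1) and the first stage of part (2) are correct and follow the paper's own route: Proposition \ref{UMT} together with Proposition \ref{c:umt} gives (1), and $t$-dimension one plus the $t$-finite character supplied by Clifford $w$-regularity (\cite[Theorem 5.2]{GP1}) gives that $D$ is weakly Matlis, whence $D[X]$ is weakly Matlis by (1).

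The gap is in the step you call routine: there is no ``standard height formula'' $\op{ht}(M[X])=\op{ht}(M)$ for arbitrary domains. It holds for Noetherian domains, but in general heights can jump in polynomial extensions: by Seidenberg there exist one-dimensional (necessarily non-Noetherian) domains $D$ with $\dim D[X]=3$, and in such a domain some maximal ideal $M$ --- which is automatically $t$-maximal of height one, so $D$ does have $t$-dimension one --- satisfies $\op{ht}(M[X])=2$, the extra link in the chain being an upper to zero contained in $M[X]$. So this is exactly the point where a further hypothesis must enter, and it is the UMT property, which you invoked only for part (1), that saves the argument. Concretely: if $Q$ is a nonzero prime with $Q\subsetneq M[X]$, then $Q\cap D\subsetneq M$ (equality $Q\cap D=M$ would force $M[X]\sub Q$), so $Q\cap D=(0)$ because $\op{ht}(M)=1$, i.e., $Q$ is an upper to zero; since $D$ is Clifford $w$-regular it is a UMT-domain (Proposition \ref{UMT}), so $Q$ is $t$-maximal, contradicting $Q\subsetneq M[X]$ with $M[X]$ a proper $t$-ideal. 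Hence $\op{ht}(M[X])=1$. This is precisely how the paper argues: it records that each upper to zero of $D[X]$ is $t$-maximal (hence of height one) and deduces from that fact that each extended prime $t$-ideal has height one. Your closing assessment --- that the height computation is routine and that Lemma \ref{lemmapoli}(3) rather than the UMT property does the real work --- is therefore backwards: without UMT, the equality $\op{ht}(M[X])=1$ can fail even when $D$ has $t$-dimension one.
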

\begin{proof} (1) Since a Clifford $w$-regular domain is a UMT-domain (Proposition \ref{UMT}), $D[X]$ is weakly Matlis  by Proposition \ref{c:umt}.

(2) Let $D$ be of $t$-dimension one. Since a Clifford $w$-regular domain has $t$-finite character \cite[Theorem 5.2]{GP1}, $D$ is weakly Matlis and so
 $D[X]$ is weakly Matlis by item (1). Besides,
each upper to zero of $D[X]$ is $t$-maximal (of height one). Thus each extended prime $t$-ideal has height one and we conclude that $D[X]$ has $t$-dimension one.  \end{proof}

\begin{remark} \rm
A domain is called \emph{quasi-Pr\"ufer} if its integral closure is a Pr\"ufer domain \cite[Corollary 6.5.14]{FHP}. Clearly, the quasi-Pr\"ufer property does not pass to  polynomial rings, since the integral closure of a polynomial ring is a polynomial ring and so it is not Pr\"ufer. However, it is known that a domain is quasi-Pr\"ufer if and only if it is a UMT DW-domain \cite[Theorem 2.4]{DHLRZ} and that the UMT-property transfers to  polynomial rings \cite[Theorem 2.4]{UMT}. This is still another example of a class of domains whose ``non-DW" part passes to polynomials.
\end{remark}

The following result, due to Olberding, is useful to understand what happens when $D$ is local and stable.

\begin{theorem} \label{Olb} \cite[Theorem 2.3]{O1} A domain $D$ is stable if and only if  {\rm (a)} $D$ is finitely stable; {\rm (b)} $QD_Q$ is a stable ideal of $D_Q$, for each nonzero prime ideal $Q$; {\rm (c)} $D_Q$ is a valuation domain, for each nonzero nonmaximal prime ideal $Q$; {\rm (d)} $D$ has finite character.
\end{theorem}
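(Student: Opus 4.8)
The plan is to prove both implications and to reduce the global statement to a purely local one by invoking the finite-character criterion for stability with $\ast=d$ \cite[Corollary 1.10]{GP}: once finite character is known, $D$ is stable if and only if $D_M$ is stable for every maximal ideal $M$ (if one worries about circularity, this local-to-global reduction can also be run directly from finite character by standard patching). For the forward implication, suppose $D$ is stable. Then (a) is immediate, since every finitely generated ideal is in particular stable, and (d) is exactly the finite-character half of \cite[Corollary 1.10]{GP}. For (b), localization is flat, so each $D_Q$ is again stable and stable ideals extend to stable ideals (the $d$-analogue of Lemma \ref{flat}); taking the ideal $QD_Q$ gives (b). Condition (c) is the one genuinely structural input: I would show, from the spectral theory of stable domains, that a localization of a stable domain at a nonmaximal prime is a valuation domain.

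For the converse, assume (a)--(d). By (d) and \cite[Corollary 1.10]{GP} it suffices to prove that $R:=D_M$ is stable for each maximal ideal $M$. Localizing the hypotheses at $M$ (and then at smaller primes) translates them into: $R$ is finitely stable, $PR_P$ is stable for every nonzero prime $P$ of $R$, and $R_P$ is a valuation domain for every nonzero nonmaximal prime $P$ of $R$; condition (d) becomes vacuous. Thus the theorem reduces to the local assertion that a local domain with these three properties is stable, and this local assertion is the real content.

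To prove the local assertion, I would show that an arbitrary nonzero ideal $I$ of the local domain $(R,M)$ is stable, splitting on the radical $P:=\sqrt{I}$. If $P\subsetneq M$, then $R_P$ is a valuation domain by the third property, while the second property makes the maximal ideal of each further localization $R_{P'}$ ($P'\sub P$) stable, hence distinct from its square; so every prime of $R_P$ differs from its square, and $R_P$ is itself stable by the valuation criterion \cite[Proposition 4.1]{O3}. Consequently $IR_P$ is stable, and one transfers this back to $I$ using finite stability to absorb the finitely generated discrepancy between $I$ and $IR_P\cap R$. If $P=M$, then $I$ is $M$-primary; here I would exhaust $I$ by finitely generated ideals $J_\lambda\uparrow I$, each stable by (a), and use the stability of $M$ (the case $P=M$ of the localized (b)) to control the directed family of endomorphism rings $E(J_\lambda)=(J_\lambda:J_\lambda)$ and pass to the limit, obtaining $I(E(I):I)=E(I)$.

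The main obstacle is precisely this last, $M$-primary, case when $I$ is not finitely generated. Finite stability controls only finitely generated ideals, and in general $E(I)=(I:I)$ is strictly larger than $\bigcup_\lambda E(J_\lambda)$, so the stability identity for $I$ cannot be read off directly from the stable approximations $J_\lambda$ and must be recovered as a genuine limit. The leverage for closing the argument is that stability of $M$ severely restricts the overrings that can occur between $R$ and $E(I)$ --- each is again a local, finitely stable domain with stable maximal ideal --- which supplies the finiteness needed to pass to the limit. Turning this heuristic into a rigorous limiting argument, essentially Olberding's structure theory for stable local rings, is the technical core of the proof.
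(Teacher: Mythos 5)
The paper does not actually prove this statement: Theorem \ref{Olb} is quoted from Olberding \cite[Theorem 2.3]{O1} as a known external result, so there is no internal proof to compare against. Judged as a self-contained proof attempt, your proposal has genuine gaps --- indeed, the two places you yourself flag as ``the one genuinely structural input'' and ``the technical core'' are precisely the content of Olberding's theorem, and neither is proven. In the forward direction, condition (c) (that the localization of a stable domain at a nonzero nonmaximal prime is a valuation domain) is simply asserted ``from the spectral theory of stable domains''; this is not a routine localization fact, but a substantial theorem resting on Rush's result that finitely stable domains have Pr\"ufer integral closure and on Olberding's analysis of the overrings and primes of stable domains. In the converse, the $M$-primary case for a non-finitely-generated ideal $I$ is explicitly left as a heuristic: as you note, $E(I)=(I:I)$ can be strictly larger than $\bigcup_\lambda E(J_\lambda)$, and the claim that stability of $M$ ``supplies the finiteness needed to pass to the limit'' names no mechanism at all. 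Finite stability gives no direct control over infinitely generated ideals, and closing exactly this gap is what Olberding's structure theory does.

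There are two further soft spots. First, in the case $\sqrt{I}=P\subsetneq M$, the transfer step --- ``absorb the finitely generated discrepancy between $I$ and $IR_P\cap R$'' --- is unsupported: there is no reason for $I$ and $IR_P\cap R$ to differ by anything finitely generated, and stability of $IR_P$ in $R_P$ together with finite stability of $R$ does not visibly yield $I(E(I):I)=E(I)$. Second, there is a circularity risk in citing \cite[Corollary 1.10]{GP} for condition (d): for $\ast=d$, the assertion that stable domains have finite character is itself a deep theorem of Olberding belonging to the same circle of results you are trying to reprove; your parenthetical ``standard patching'' remark only covers the harmless direction (finite character plus locally stable implies stable), not this one. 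In short, the proposal is a reasonable road map that correctly isolates where the difficulty lies, but it does not constitute a proof of the theorem.
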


\begin{proposition}\label{local}
	Let $D$ be a  local stable domain. The following conditions are equivalent:
	\begin{itemize}
		\item[(i)]  $D(X)$ is stable;
		 \item[(ii)]  $D(X)$ is Clifford regular;
	\item[(iii)]  $D(X)$ is finitely stable;
	\item[(iv)] Each non-extended finitely generated ideal of $D(X)$ is stable.	
	\end{itemize}
	\end{proposition}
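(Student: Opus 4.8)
The plan is to prove the cycle (i) $\ra$ (ii) $\ra$ (iii) $\ra$ (iv) $\ra$ (i), where only the last arrow carries real content. Indeed (i) $\ra$ (ii) is the general fact that a stable domain is Clifford regular, (ii) $\ra$ (iii) is the fact that a Clifford regular domain is finitely stable, and (iii) $\ra$ (iv) is trivial, since finite stability means that \emph{every} finitely generated ideal is stable, in particular every non-extended one.

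The work is in (iv) $\ra$ (i). First I would record two reductions. Since $D$ is stable it is a DW-domain, so $w=d$ on $D$ and therefore $\Na(D,v)=D(X)$; consequently Proposition \ref{extended} applies and guarantees that every extended ideal of $D(X)$ is stable. Moreover, as $D$ is local with maximal ideal $M$, the ring $D(X)$ is local with maximal ideal $MD(X)$, so it has finite character automatically. The strategy is then to verify Olberding's conditions (a), (b), (c) of Theorem \ref{Olb} for $D(X)$.

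The key structural step is to show that \emph{every} prime of $D(X)$ is extended, i.e.\ of the form $PD(X)$ with $P\in\spec(D)$, $P\sub M$, so that $D(X)_{PD(X)}=D_P(X)$. Primes of $D(X)$ correspond to primes $P'$ of $D[X]$ contained in $MD[X]$; setting $P:=P'\cap D\sub M$, suppose $P'$ were an upper to $P$. Passing to $(D/P)[X]$, the image of $P'$ would be an upper to zero, hence would contain a primitive polynomial; lifting it to $f\in P'$ gives $c(f)+P=D$, which forces $c(f)\nsub M$ and contradicts $f\in MD[X]$. Hence $P'=PD[X]$. Granting this, condition (a) holds because a finitely generated ideal of $D(X)$ is either extended, hence stable by Proposition \ref{extended}, or non-extended, hence stable by hypothesis (iv) --- this is the only place (iv) is used. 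For a nonzero nonmaximal prime $Q=PD(X)$ (so $0\neq P\subsetneq M$), condition (c) of Theorem \ref{Olb} applied to $D$ gives that $D_P$ is a valuation domain, so $D(X)_Q=D_P(X)$ is a local Bezout domain by Proposition \ref{D(X)}, hence a valuation domain; this is condition (c) for $D(X)$. Finally $QD(X)_Q=PD_P(X)$ is the extension to the flat overring $D_P(X)$ of the ideal $PD_P$, which is stable in $D_P$ by condition (b) of Theorem \ref{Olb} for $D$; since $D_P$ and $D_P(X)=\Na(D_P,v)$ are DW-domains, stability and $w$-stability coincide, so Lemma \ref{flat} transfers stability to $QD(X)_Q$, giving condition (b). Theorem \ref{Olb} then yields that $D(X)$ is stable.

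I expect the main obstacle to be the structural claim that every prime of $D(X)$ is extended from $D$: this is what collapses the verification of Olberding's conditions (b) and (c) to the corresponding conditions for $D$, via flatness and the Bezout-to-valuation passage for local Nagata rings. Once it is in place, everything else is bookkeeping, with hypothesis (iv) entering exactly (and only) at the finite-stability condition (a).
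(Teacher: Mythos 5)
Your overall architecture is the same as the paper's: the cycle of easy implications, then (iv) $\ra$ (i) by verifying Olberding's conditions of Theorem \ref{Olb} for $D(X)$, using that $D(X)=\Na(D,v)$ is local, that extended ideals are stable by Proposition \ref{extended}, and that everything collapses once all primes of $D(X)$ are known to be extended. But precisely that structural claim is where you have a genuine gap: the assertion that an upper to zero in $(D/P)[X]$ ``would contain a primitive polynomial'' is not a formal fact, and it is false for general domains. Concretely, let $\bar D=k[[x,y]]$ with maximal ideal $\bar M=(x,y)$ and quotient field $F$, and let $Q=(yX-x)F[X]\cap \bar D[X]$, the upper to zero over $x/y$. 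If $h=\sum_{i=0}^n a_iX^i\in Q$, then $\sum_i a_ix^iy^{n-i}=0$; if some $a_i$ were a unit, comparing lowest-order homogeneous components in $\op{gr}(\bar D)=k[x,y]$ would give a nontrivial $k$-linear relation among the distinct monomials $x^iy^{n-i}$, which is impossible. Hence $c(h)\sub \bar M$ for every $h\in Q$, i.e.\ $Q\sub \bar M[X]$ contains no primitive polynomial. In fact, for a local domain the property you assert (for $D$ and all its quotients $D/P$) is \emph{equivalent} to every prime of $D(X)$ being extended, which by \cite[Theorem 3.1]{HZ} and the remark on quasi-Pr\"ufer domains amounts to $D$ being a UMT (equivalently, quasi-Pr\"ufer) domain. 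So the step you present as a generic manipulation is exactly the nontrivial content of the proof, and your argument never invokes the stability of $D$ at this point.

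What rescues the statement --- and what the paper does --- is that stability supplies this property: a local stable domain is Clifford ($w$-)regular, hence has Pr\"ufer integral closure, hence is a UMT-domain (Proposition \ref{UMT}), and then \cite[Theorem 3.1]{HZ} gives that every prime of $\Na(D,v)=D(X)$ is extended from $D$. To repair your proof you must insert this (or prove directly that every quotient $D/P$ of a stable local domain is quasi-Pr\"ufer, which again rests on the Pr\"ufer integral closure coming from finite stability); it cannot be obtained by the bare upper-to-zero manipulation you describe. The remainder of your argument is fine and matches the paper: the trivial implications, finite character from locality of $D(X)$, condition (a) of Theorem \ref{Olb} from Proposition \ref{extended} plus hypothesis (iv), condition (c) from $D_P$ being a valuation domain and $D_P(X)$ local Bezout (Proposition \ref{D(X)}), and condition (b) via flatness (Lemma \ref{flat}) --- or, slightly more directly as in the paper, by localizing the stable extended ideal $PD(X)$.
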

	\begin{proof} (i) $\ra$ (ii) and (iii) $\ra$ (iv) are clear. (ii) $\ra$ (iii) by \cite[Proposition 2.3]{B2}.
	
	(iv) $\ra$ (i) We apply Theorem \ref{Olb}.
	Let $M$ be the maximal ideal of $D$.
The domain $R:=D(X):=\Na(D, v)$ is  local, with maximal ideal $M(X)$.
Since $D$ is a UMT-domain (Lemma \ref{UMT}), each prime ideal of $R$, is extended from $D$ \cite[Theorem  3.1]{HZ}.
Also, since $D$ is stable, by Proposition \ref{extended}, each extended ideal of $R$ is stable. Hence, each nonzero prime ideal $Q$ of $R$ is stable and so $QR_Q$ is stable.  In addition, if the prime ideal $Q:=P(X)$ of $R$ is not maximal,  $P$ is not maximal. Hence $D_P$ is a valuation domain and $R_Q=D_P(X)$ is also a valuation domain.
In conclusion, if each non-extended finitely generated ideal of $R$ is stable, $R$ is finitely stable and $R$ satisfies all the conditions of  Theorem \ref{Olb}. Thus $R$ is stable.
	\end{proof}

By applying Proposition \ref{local}, we now show that the $w$-stability of polynomial rings depends on the stability of certain local Nagata rings.

\begin{proposition} \label{nagatastable}
Let $D$ be an integral domain. The following conditions are equivalent:
\begin{enumerate}
\item[(i)] $D[X]$ is $w$-stable;
\item[(ii)] $\Na(D,v)$ is stable;
\item[(iii)] $D[X]$ has  $t$-finite character and $D_{M}(X)$ is stable for each $M\in \tmax(D)$.
\end{enumerate}
Under (any one of) these conditions, $D$ is $w$-stable.
\end{proposition}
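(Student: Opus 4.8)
The plan is to collapse all three conditions onto a single local statement --- that $D_M(X)$ is stable for every $M \in \tmax(D)$, together with $t$-finite character --- and then read off the equivalences. The engine is the local characterization of ($w$-)stability \cite[Corollary 1.10]{GP}: a domain is $w$-stable (resp.\ stable) precisely when its localizations at the $t$-maximal (resp.\ maximal) ideals are stable and it has $t$-finite (resp.\ finite) character. I would apply this criterion to the two domains $D[X]$ and $\Na(D,v)$, feeding in the descriptions of their ($t$-)maximal ideals from Lemma \ref{lemmapoli} and Proposition \ref{Nagata}.

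To prove (i) $\Leftrightarrow$ (iii) I would apply the local characterization of $w$-stability directly to $D[X]$. By Lemma \ref{lemmapoli}(3) a $t$-maximal ideal $N$ of $D[X]$ is of one of two kinds: if $N$ is an upper to zero then $D[X]_N$ is a $DVR$, hence automatically stable and contributing no condition; if $N = M[X]$ with $M \in \tmax(D)$, then $D[X]_N = D[X]_{MD[X]} = D_M(X)$ by Proposition \ref{Nagata}(3). Consequently $D[X]$ is $w$-stable if and only if $D_M(X)$ is stable for every $M \in \tmax(D)$ and $D[X]$ has $t$-finite character, which is exactly (iii). Proposition \ref{local} is the tool that makes the surviving condition tractable: once the equivalent hypotheses force $D$ to be $w$-stable, and hence each $D_M$ stable, it identifies ``$D_M(X)$ stable'' with the finite stability or Clifford regularity of $D_M(X)$.

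For (ii) $\Leftrightarrow$ (iii) I would apply the local characterization of (ordinary) stability to $\Na(D,v)$ --- the right notion here, since $\Na(D,v)$ is a DW-domain by Proposition \ref{Nagata}(5), so its $t$-maximal ideals are just its maximal ideals and $t$-finite character is finite character. By Proposition \ref{Nagata}(2) the maximal ideals of $\Na(D,v)$ are the $M\Na(D,v)$ with $M \in \tmax(D)$, with localization $\Na(D,v)_{M\Na(D,v)} = D_M(X)$ by Proposition \ref{Nagata}(3); and $\Na(D,v)$ has finite character if and only if $D[X]$ has $t$-finite character, by Lemma \ref{finchar}. Hence $\Na(D,v)$ is stable exactly when $D_M(X)$ is stable for each $M \in \tmax(D)$ and $D[X]$ has $t$-finite character, i.e.\ once more (iii). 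The final assertion is then immediate: under any of the three conditions $D[X]$ is $w$-stable, so $D$ is $w$-stable by Proposition \ref{stabledown}.

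Given the cited structure theory, the argument is mostly an assembly, and the points that need care are the bookkeeping ones: verifying that the upper-to-zero primes of $D[X]$ contribute only $DVR$s and so drop out of the local criterion, and checking that the two a priori distinct finiteness hypotheses --- $t$-finite character of $D[X]$ and finite character of $\Na(D,v)$ --- really coincide, via Lemma \ref{finchar}. The one genuinely delicate input is the analysis of the local Nagata rings $D_M(X)$: deciding their stability is exactly what Proposition \ref{local} provides, and it is this reduction of ``$D_M(X)$ stable'' to a condition already understood for the local stable domain $D_M$ that powers the whole equivalence.
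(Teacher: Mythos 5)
Your proof is correct, but it is organized differently from the paper's. The paper proves a cycle (i) $\ra$ (ii) $\ra$ (iii) $\ra$ (i): the step (i) $\ra$ (ii) is a \emph{global} argument --- every ideal of $\Na(D,v)$ is extended from $D[X]$ because $\Na(D,v)$ is a ring of fractions of $D[X]$, so Proposition \ref{extended} (which rests on the flatness lemma, Lemma \ref{flat}) gives $w$-stability of $\Na(D,v)$, and the DW property then collapses $w$-stability to stability; the step (ii) $\ra$ (iii) unpacks the $t$-maximal ideals of $D[X]$ as you do; and only the step (iii) $\ra$ (i) invokes the local characterization \cite[Corollary 1.10]{GP}. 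You instead make (iii) the hub and apply the local characterization twice, once to $D[X]$ (for $w$-stability) and once to the DW-domain $\Na(D,v)$ (for ordinary stability, i.e.\ the $w=d$ case of the same criterion), with Lemma \ref{finchar} matching the two finiteness conditions and Lemma \ref{lemmapoli}(3) plus Proposition \ref{Nagata}(2),(3) matching the local rings. Your route is more symmetric and avoids Proposition \ref{extended} entirely; the paper's route buys, as a byproduct, the directly quotable fact that $w$-stability passes from $D[X]$ to its ring of fractions $\Na(D,v)$ via extension of ideals. One small correction: your appeals to Proposition \ref{local} are extraneous here --- it plays no role in the equivalence of (i), (ii), (iii) (the upper-to-zero localizations are DVRs and the extended ones are handled by the bookkeeping above); Proposition \ref{local} is only needed later, in Theorem \ref{stable}, to replace ``$D_M(X)$ stable'' by finite stability or Clifford regularity. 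The closing assertion, that $D$ is $w$-stable via Proposition \ref{stabledown}, coincides with the paper's.
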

\begin{proof}
(i) $\ra$ (ii) Since $\Na(D,v)$ is a ring of fractions of $D[X]$, every ideal of $\Na(D,v)$ is extended from $D[X]$. Hence $\Na(D,v)$ is $w$-stable by Proposition \ref{extended}. Moreover, $\Na(D,v)$ is a DW-domain by 
Proposition \ref{Nagata}, so it is stable.

(ii) $\ra$ (iii) The $t$-maximal ideals of $D[X]$ are either uppers to zero or extended from $D$ (Lemma \ref{lemmapoli}). If $N$ is an upper to zero, $D[X]_N$ is a DVR. Otherwise, $N=MD[X]$ with $M\in \tmax(D)$. By Proposition \ref{Nagata}, $M\Na(D,v)$ is a maximal ideal of $\Na(D,v)$ and $\Na(D,v)_{M\Na(D,v)}=D[X]_{MD[X]}=D_M(X)$ is  stable. Thus $D[X]$ is  $t$-locally stable.  In addition, since $\Na(D,v)$ has finite character, $D[X]$ has $t$-finite character (Lemma \ref{finchar}).

(iii) $\ra$ (i) We apply the $t$-local characterization \cite[Corollary 1.10]{GP}.

Under condition (i), $D$ is $w$-stable by Proposition \ref{stabledown}.
\end{proof}

	\begin{theorem}\label{stable} Assume that $D$ is a $w$-stable domain. The following
conditions are equivalent:
	\begin{itemize}
		\item[(i)]  $D[X]$ is $w$-stable;
		\item[(ii)]  $\Na(D, v)$ is stable;
		 \item[(iii)] $D_{M}(X)$ is stable for each $M\in \tmax(D)$;
\item[(iv)] $D_{M}(X)$ is Clifford regular for each $M\in \tmax(D)$.
 \end{itemize}
\end{theorem}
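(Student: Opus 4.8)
The plan is to deduce everything from the two local results already in hand, Proposition \ref{nagatastable} and Proposition \ref{local}, using the hypothesis that $D$ is $w$-stable only to make the $t$-finite-character clause free of charge. So I would first record the two facts that the $w$-stability of $D$ supplies, via the local characterization \cite[Corollary 1.10]{GP}: $D$ has $t$-finite character, and $D_M$ is a local stable domain for every $M\in\tmax(D)$.

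The equivalence (i) $\Leftrightarrow$ (ii) is then immediate from Proposition \ref{nagatastable}, which holds for an arbitrary domain. For (i) $\Leftrightarrow$ (iii) I would invoke the same proposition, whose condition (iii) reads ``$D[X]$ has $t$-finite character and $D_M(X)$ is stable for each $M\in\tmax(D)$''. Since $D$ has $t$-finite character, so does $D[X]$ by Lemma \ref{finchar}; hence the finite-character clause is automatically satisfied and the condition collapses to exactly condition (iii) of the present theorem. This already gives (i) $\Leftrightarrow$ (ii) $\Leftrightarrow$ (iii).

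The only genuinely new step is (iii) $\Leftrightarrow$ (iv), which I would argue one $t$-maximal ideal at a time. Fix $M\in\tmax(D)$. By the first paragraph $D_M$ is a local stable domain, and since stable domains are DW-domains its maximal ideal $MD_M$ is a $t$-ideal; consequently $w=d$ on $D_M$ and $\Na(D_M,v)=D_M(X)$, where the ring $D_M(X)=D[X]_{MD[X]}$ is the one furnished by Proposition \ref{Nagata}(3). I may therefore apply Proposition \ref{local} to the local stable domain $D_M$, obtaining that $D_M(X)$ is stable if and only if $D_M(X)$ is Clifford regular. Letting $M$ range over $\tmax(D)$ turns this into (iii) $\Leftrightarrow$ (iv) and closes the loop.

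I do not expect a serious obstacle: once Propositions \ref{nagatastable} and \ref{local} are granted, the argument is pure assembly, and the substance of the theorem is really concentrated in those two prior results (and ultimately in Olberding's Theorem \ref{Olb}, which powers Proposition \ref{local}). The one point that needs care is the local reduction in (iii) $\Leftrightarrow$ (iv): I must verify that $D_M$ actually meets the hypotheses of Proposition \ref{local} and that the ``$D(X)$'' appearing there, applied to $D_M$, is the same ring as $\Na(D,v)_{M\Na(D,v)}=D_M(X)$. This is exactly where the DW-property of the stable local domain $D_M$ and the identification in Proposition \ref{Nagata}(3) are needed.
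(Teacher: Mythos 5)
Your proposal is correct and follows essentially the same route as the paper's own proof: the equivalence of (i), (ii), (iii) comes from Proposition \ref{nagatastable} once the $w$-stability of $D$ and Lemma \ref{finchar} make the $t$-finite-character clause automatic, and (iii) $\Leftrightarrow$ (iv) comes from applying Proposition \ref{local} to each local stable domain $D_M$ (stability of $D_M$ being given by \cite[Corollary 1.10]{GP}). Your extra verification that $D_M$ is a DW-domain and that $\Na(D_M,v)=D_M(X)$ is a harmless (and reasonable) expansion of a point the paper leaves implicit inside the proof of Proposition \ref{local}, not a different argument.
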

\begin{proof}
(i) $\lra$ (ii) $\lra$ (iii) Since $w$-stable domains have $t$-finite character \cite[Corollary 1.10]{GP}, $D[X]$ has $t$-finite character (Lemma \ref{finchar}). Hence we can apply Proposition \ref{nagatastable}.

(iii) $\lra$ (iv) by Proposition \ref{local}, because $D_M$ is stable \cite[Corollary 1.10]{GP}.
\end{proof}

In order to get a similar result for Clifford $w$-regularity, we need some additional hypotheses.

\begin{proposition}\label{nagataclifford}
Let $D$ be an integral domain and consider the following conditions:
\begin{enumerate}
\item[(1)] $D[X]$ is Clifford $w$-regular;
\item[(2)] $\Na(D,v)$ is Clifford regular;
\item[(3)] $D[X]$ has $t$-finite character and $D_{M}(X)$ is Clifford regular for each $M\in \tmax(D)$.
\end{enumerate}
Then {\rm (1)} $\ra$ {\rm (2)} $\ra$ {\rm (3)}. Moreover, if $D$ is integrally closed or $D[X]$ is weakly Matlis, then {\rm (3)} $\ra$ {\rm (1)} and $D$ is Clifford $w$-regular.
\end{proposition}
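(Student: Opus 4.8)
The plan is to follow the template set by the proof of Proposition \ref{nagatastable}, replacing stability by vN-regularity throughout, for the forward chain (1) $\ra$ (2) $\ra$ (3), and then to obtain the reverse implication (3) $\ra$ (1) under the stated extra hypothesis by invoking the local--global criterion of Theorem \ref{FC3}, applied with $D[X]$ in the role of the base domain.

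For (1) $\ra$ (2), I would note that $\Na(D,v)$ is a ring of fractions of $D[X]$, so every ideal of $\Na(D,v)$ is extended from $D[X]$, and the extension $D[X]\sub \Na(D,v)$ is flat. Since $D[X]$ is Clifford $w$-regular, each of its ideals is $w$-regular, so by Lemma \ref{flat} each extended ideal of $\Na(D,v)$ is $w$-regular; as every ideal of $\Na(D,v)$ is extended, all its ideals are $w$-regular. Because $\Na(D,v)$ is a DW-domain (Proposition \ref{Nagata}), on it $w$-regularity coincides with vN-regularity, and therefore $\Na(D,v)$ is Clifford regular. For (2) $\ra$ (3), I would split the $t$-maximal ideals of $D[X]$ using Lemma \ref{lemmapoli}(3): an upper to zero $N$ gives a DVR $D[X]_N$, which is trivially Clifford regular, while an extended ideal $N=MD[X]$ with $M\in\tmax(D)$ gives, by Proposition \ref{Nagata}, $\Na(D,v)_{M\Na(D,v)}=D_M(X)$, a localization of the Clifford regular domain $\Na(D,v)$ at a maximal ideal, hence Clifford regular. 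Finally, since a Clifford regular domain has finite character, $\Na(D,v)$ has finite character, and Lemma \ref{finchar} yields the $t$-finite character of $D[X]$.

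The hard part is (3) $\ra$ (1), and this is exactly where the extra hypothesis is forced. Condition (3) says precisely that $D[X]$ is $t$-locally Clifford regular (the localizations at uppers to zero being DVRs and those at extended $t$-maximal ideals $MD[X]$ being $D_M(X)$) and has $t$-finite character. I would therefore apply Theorem \ref{FC3} to $D[X]$. That theorem requires either that $D[X]$ be integrally closed or that each nonzero $t$-prime ideal of $D[X]$ lie in a unique $t$-maximal ideal. If $D$ is integrally closed, then so is $D[X]$, giving case (a); if instead $D[X]$ is weakly Matlis, then case (b) holds by the very definition of weakly Matlis. In either case Theorem \ref{FC3} promotes the $t$-local data to the global conclusion that $D[X]$ is Clifford $w$-regular, after which $D$ is Clifford $w$-regular by Proposition \ref{stabledown}.

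The obstruction to dropping the extra hypothesis is structural rather than computational: unlike $w$-stability, whose $t$-local characterization is unconditional, Clifford $w$-regularity does not satisfy a local--global principle in general, so without controlling the $t$-primes of $D[X]$ (through integral closedness of $D$, or through the weakly Matlis condition) one cannot recover global Clifford $w$-regularity of $D[X]$ from its $t$-local behavior. This is why the reverse implication, and the conclusion that $D$ is Clifford $w$-regular, can only be asserted under the added assumption.
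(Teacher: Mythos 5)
Your proof is correct and follows essentially the same route as the paper's: the forward implications (1) $\ra$ (2) $\ra$ (3) are obtained by adapting the proof of Proposition \ref{nagatastable} (via Lemma \ref{flat}, the DW-property of $\Na(D,v)$ from Proposition \ref{Nagata}, and Lemmas \ref{lemmapoli} and \ref{finchar}), and the reverse implication (3) $\ra$ (1) under the extra hypothesis is exactly the paper's appeal to Theorem \ref{FC3} applied to $D[X]$, followed by Proposition \ref{stabledown}. Your write-up merely spells out the details that the paper compresses into cross-references.
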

\begin{proof}
The proofs of (1) $\ra$ (2) and of (2) $\ra$ (3) are exactly the same as the proofs of Proposition \ref{nagatastable} ((i)$\ra$ (ii) and (ii) $\ra$ (iii) respectively) with Clifford ($w$-)regular instead of $w$-stable.

If (3) holds, we cannot conclude in general that $D[X]$ is Clifford $w$-regular. However, under the additional hypothesis that $D$ (and so $D[X]$) is integrally closed or $D[X]$ is weakly Matlis, we can apply  Theorem \ref{FC3} and  obtain that $D[X]$ is Clifford $w$-regular.

Finally, under condition (1), $D$ is Clifford $w$-regular by Proposition \ref{stabledown}.
\end{proof}

 \begin{theorem} \label{tone} Assume that the domain $D$ is Clifford $w$-regular and weakly Matlis.
 The following
conditions are equivalent:
	\begin{itemize}
		\item[(i)]  $D[X]$ is  Clifford $w$-regular;
		\item[(ii)]  $\Na(D, v)$ is  Clifford regular;
 \item[(iii)] $D_{M}(X)$ is Clifford regular for each $M\in \tmax(D)$.
 \end{itemize}
	\end{theorem}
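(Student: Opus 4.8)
The plan is to reduce everything to Proposition \ref{nagataclifford}, whose three conditions (1), (2), (3) match almost verbatim the conditions (i), (ii), (iii) of the present statement. The only discrepancy is that condition (3) of Proposition \ref{nagataclifford} carries the extra requirement that $D[X]$ have $t$-finite character, whereas condition (iii) here asks only that $D_M(X)$ be Clifford regular for each $M\in\tmax(D)$. So the first thing I would record is that, under our standing hypothesis, this extra requirement is automatic: since $D$ is Clifford $w$-regular it has $t$-finite character by \cite[Theorem 5.2]{GP1}, and then $D[X]$ has $t$-finite character by Lemma \ref{finchar}. Consequently condition (iii) is equivalent to condition (3) of Proposition \ref{nagataclifford}.

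With this identification in hand, the implications (i) $\ra$ (ii) $\ra$ (iii) follow at once from the unconditional part of Proposition \ref{nagataclifford}, namely (1) $\ra$ (2) $\ra$ (3), composed with the equivalence (iii) $\lra$ (3) just established.

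For the remaining implication (iii) $\ra$ (i), I would invoke the ``Moreover'' clause of Proposition \ref{nagataclifford}, which yields (3) $\ra$ (1) as soon as $D[X]$ is weakly Matlis. This is exactly where the weakly Matlis hypothesis on $D$ enters: since $D$ is Clifford $w$-regular and weakly Matlis, Corollary \ref{UMT1}(1) guarantees that $D[X]$ is weakly Matlis. Hence the hypothesis of the ``Moreover'' clause is satisfied, (3) $\ra$ (1) applies, and through (iii) $\lra$ (3) we obtain (iii) $\ra$ (i), closing the cycle.

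The step carrying the real content is the passage of the weakly Matlis property from $D$ to $D[X]$, but this has already been done for us in Corollary \ref{UMT1}(1) (which itself rests on the fact that a Clifford $w$-regular domain is a UMT-domain, Proposition \ref{UMT}, together with Proposition \ref{c:umt}). Thus, granting the machinery assembled in the preceding results, the theorem is essentially a matter of matching hypotheses: the weakly Matlis assumption is precisely what promotes the local-to-global direction (3) $\ra$ (1) of Proposition \ref{nagataclifford} from a conditional to an unconditional implication in our setting, while the Clifford $w$-regularity of $D$ is what renders the $t$-finite character clause of condition (3) automatic.
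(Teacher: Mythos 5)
Your proof is correct and follows essentially the same route as the paper: both reduce the theorem to Proposition \ref{nagataclifford} after using Corollary \ref{UMT1}(1) to pass the weakly Matlis property from $D$ to $D[X]$. The only cosmetic difference is that you obtain the $t$-finite character of $D[X]$ via \cite[Theorem 5.2]{GP1} together with Lemma \ref{finchar}, whereas the paper extracts it directly from $D[X]$ being weakly Matlis; both justifications are valid.
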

	\begin{proof} $D[X]$ is weakly Matlis by Corollary \ref{UMT1}(1), in particular it has $t$-finite character. Hence we can apply Proposition \ref{nagataclifford}.
	\end{proof}
	
	Putting together Theorems \ref{stable} and \ref{tone}, we immediately get that if $D$ is $w$-stable and weakly Matlis, $w$-stability and Clifford $w$-regularity of polynomial rings are equivalent.
	
	\begin{theorem} \label{Mori2} Let $D$ be a $w$-stable weakly Matlis domain. The following conditions are equivalent:
	\begin{itemize}
		\item[(i)]  $D[X]$ is $w$-stable;
			\item[(ii)]  $D[X]$ is Clifford $w$-regular;
		\item[(iii)]Ê$\Na(D, v)$ is stable;
			\item[(iv)]Ê$\Na(D, v)$ is Clifford regular;
			\item[(v)] $D_{M}(X)$ is stable for each $M\in \tmax(D)$;
			\item[(vi)] $D_{M}(X)$ is Clifford regular for each $M\in \tmax(D)$.
	\end{itemize}
		\end{theorem}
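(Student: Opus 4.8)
The plan is to derive Theorem \ref{Mori2} directly from the two theorems just proved, with essentially no new work. The hypotheses are that $D$ is $w$-stable and weakly Matlis. The first observation is that these hypotheses are exactly those under which both Theorem \ref{stable} and Theorem \ref{tone} apply simultaneously: $w$-stability is the standing assumption of Theorem \ref{stable}, and weak Matlis-ness plus Clifford $w$-regularity is the standing assumption of Theorem \ref{tone}. The only gap to bridge is that Theorem \ref{tone} assumes $D$ is Clifford $w$-regular, whereas here we assume it is $w$-stable. But a $w$-stable domain is always Clifford $w$-regular (noted in the Preliminaries: ``a $w$-stable ideal is $w$-regular and so a $w$-stable domain is Clifford $w$-regular''), so that hypothesis is automatically met.

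Granting this, the argument is a simple chaining of the two equivalence lists. First I would invoke Theorem \ref{stable} to obtain the equivalence of (i) $D[X]$ is $w$-stable, (iii) $\Na(D,v)$ is stable, and (v) $D_M(X)$ is stable for each $M\in\tmax(D)$. Next I would invoke Theorem \ref{tone}, whose hypotheses are satisfied since $D$ is Clifford $w$-regular and weakly Matlis, to obtain the equivalence of (ii) $D[X]$ is Clifford $w$-regular, (iv) $\Na(D,v)$ is Clifford regular, and (vi) $D_M(X)$ is Clifford regular for each $M\in\tmax(D)$. At this stage one has two separate three-element cycles of equivalences, and the task reduces to linking them.

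The two cycles are joined by the relationship between stability and Clifford regularity. The cleanest link is at the Nagata-ring level: (iii) $\Leftrightarrow$ (iv), that is, $\Na(D,v)$ is stable if and only if it is Clifford regular. The forward direction holds for any domain, since stable domains are Clifford regular. For the reverse, one uses that $\Na(D,v)$ is a DW-domain (Proposition \ref{Nagata}(5)) and local (its maximal ideals are described in Proposition \ref{Nagata}(2)); more to the point, (v) $\Leftrightarrow$ (vi) is precisely the content of Proposition \ref{local} applied to each local stable domain $D_M$, giving the equivalence of stability and Clifford regularity of $D_M(X)$. Thus the span between the two cycles can also be made at the local level via Proposition \ref{local}.

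I expect the main (and only) obstacle to be bookkeeping: making sure the hypotheses of Proposition \ref{local} are in force for each $D_M(X)$. Proposition \ref{local} requires $D_M$ to be a \emph{local stable} domain; locality of $D_M$ is automatic, and stability of $D_M$ follows from the $w$-stability of $D$ via the local characterization \cite[Corollary 1.10]{GP}, exactly as used in the proof of Theorem \ref{stable}. With that checked, Proposition \ref{local} gives (v) $\Leftrightarrow$ (vi), welding the two cycles into a single equivalence class comprising all six conditions. In writing the proof I would simply state: the conditions (i), (iii), (v) are equivalent by Theorem \ref{stable}; the conditions (ii), (iv), (vi) are equivalent by Theorem \ref{tone} (applicable because a $w$-stable domain is Clifford $w$-regular); and (v) $\Leftrightarrow$ (vi) by Proposition \ref{local}, since $D_M$ is local and stable for each $M\in\tmax(D)$. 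This closes the full six-way equivalence.
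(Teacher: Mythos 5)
Your proposal is correct and takes essentially the same route as the paper: the paper's proof also chains Theorem \ref{stable} with Theorem \ref{tone} (the latter applicable because a $w$-stable domain is Clifford $w$-regular), the only cosmetic difference being that the paper reads the link (v) $\lra$ (vi) directly off Theorem \ref{stable}, whose list already contains both conditions and whose proof of that step is exactly your appeal to Proposition \ref{local} for the local stable domains $D_M$. One aside in your middle paragraph is false --- $\Na(D,v)$ is not local in general, since by Proposition \ref{Nagata}(2) it has one maximal ideal for each $M\in \tmax(D)$ --- but your final argument does not use this claim, so the proof stands.
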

		 \begin{proof}  		
		(i) $\lra$ (iii) $\lra$ (v) $\lra$ (vi) follow from Theorem \ref{stable}.
		
		(ii) $\lra$ (iv) $\lra$ (vi) by Theorem \ref{tone}.		
				\end{proof}
	
	A class of weakly Matlis domains is given by \emph{$w$-divisorial domains}, that is, domains in which each $w$-ideal is divisorial \cite{GE}. In fact, $D$ is $w$-divisorial if and only if $D$ is weakly Matlis and $D_M$ is divisorial, for each $t$-maximal ideal $M$ \cite[Theorem 1.5]{GE}.  (A \emph{divisorial domain} is a domain whose ideals are all divisorial.) The transfer of $w$-divisoriality to polynomial rings was studied in \cite{GHP}.
	
A domain 	$D$ that is at the same time $w$-stable and $w$-divisorial is precisely a weakly Matlis domain such that $D_M$ is totally divisorial, for each $t$-maximal ideal $M$ \cite[Corollary 3.2]{GP}. (A \emph{totally divisorial domain} is a domain whose overrings are all divisorial.)
				
	\section{Two positive results}
	
	We are now able to show that Clifford $w$-regularity and $w$-stability pass to polynomial rings in two cases. The first one is when $D$ is integrally closed, the second case is when $D$ is a  $w$-stable $w$-divisorial Mori domain.

 \begin{theorem} \label{icstable} Let $D$ be an integrally closed domain. The following conditions are equivalent:
 \begin{itemize}
 \item[(i)] $D$ is Clifford $w$-regular (resp., $w$-stable);
  \item[(ii)] $D[X]$ is Clifford $w$-regular (resp., $w$-stable);
   \item[(iii)] $\Na(D,v)$ is Clifford regular (resp., stable).
   \end{itemize}
 \end{theorem}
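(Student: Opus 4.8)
The plan is to derive both chains of equivalences from the local--Nagata criteria of Propositions~\ref{nagataclifford} and~\ref{nagatastable}, reducing the whole statement to an analysis of the rings $D_M(X)$ with $M\in\tmax(D)$. For the Clifford $w$-regular case, Proposition~\ref{nagataclifford} already gives (for integrally closed $D$) that (ii) [$D[X]$ Clifford $w$-regular], (iii) [$\Na(D,v)$ Clifford regular], and the condition ``$D[X]$ has $t$-finite character and $D_M(X)$ is Clifford regular for every $M\in\tmax(D)$'' are all equivalent, and that any of them forces $D$ to be Clifford $w$-regular, i.e. (ii)$\Rightarrow$(i). Symmetrically, Proposition~\ref{nagatastable} shows with no extra hypothesis that (ii) [$D[X]$ $w$-stable], (iii) [$\Na(D,v)$ stable] and ``$D[X]$ has $t$-finite character and $D_M(X)$ is stable for every $M$'' are equivalent and force $D$ to be $w$-stable. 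Thus in both settings I get (ii)$\Leftrightarrow$(iii) and (ii)$\Rightarrow$(i) immediately, and the only missing implication is (i)$\Rightarrow$(ii), which I would establish by verifying the relevant local condition.

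So suppose $D$ is integrally closed and Clifford $w$-regular (resp. $w$-stable). By Theorem~\ref{FC3} (resp. the local characterization \cite[Corollary~1.10]{GP}), $D$ has $t$-finite character and each $D_M$ is Clifford regular (resp. stable); the $t$-finite character transfers to $D[X]$ by Lemma~\ref{finchar}. Since localization preserves integral closure, every $D_M$ is a local, integrally closed, Clifford regular domain (recall that stable domains are Clifford regular), hence a valuation domain, exactly as in the proof of Theorem~\ref{FC3} via \cite[Theorem~3]{BS}. The key step I would isolate is that the Nagata ring of a valuation domain $V$ is again a valuation domain: $V$ is a DW-domain, so $V(X)=\Na(V,v)$ is a Bezout domain by Proposition~\ref{D(X)}(2), and it is quasi-local by Proposition~\ref{Nagata}(2) because $V$ has a single $t$-maximal ideal; a local Bezout domain is a valuation domain. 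Applying this with $V=D_M$ shows each $D_M(X)$ is a valuation domain.

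In the Clifford $w$-regular case this completes the local condition at once, since valuation domains are Clifford regular \cite{BS}, and (i)$\Rightarrow$(ii) then follows from Proposition~\ref{nagataclifford}. The $w$-stable case hides the one real difficulty, which I expect to be the main obstacle: a valuation domain is stable only when $P\neq P^2$ for every prime $P$, so the fact that $D_M(X)$ is a valuation domain yields Clifford regularity but not stability directly. To bridge this gap I would invoke Proposition~\ref{local}: because $D_M$ is \emph{local and stable} and $D_M(X)$ is Clifford regular, that proposition forces $D_M(X)$ to be stable. With every $D_M(X)$ stable and $D[X]$ of $t$-finite character, the local condition of Proposition~\ref{nagatastable} is met, giving (i)$\Rightarrow$(ii) and closing all the equivalences.
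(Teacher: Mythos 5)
Your proof is correct, but for the hard direction it follows a genuinely different route than the paper. The paper proves (i) $\Rightarrow$ (iii) \emph{globally}: an integrally closed Clifford $w$-regular domain is a P$v$MD \cite[Corollary 4.5]{GP1}, so by Kang's theorem \cite[Theorem 3.1]{K} \emph{every} ideal of $\Na(D,v)$ is extended from $D$, and Proposition \ref{extended} then gives at once that $\Na(D,v)$ is Clifford regular (resp.\ stable); the remaining implications (iii) $\Rightarrow$ (ii) $\Rightarrow$ (i) are quoted from Propositions \ref{nagatastable}/\ref{nagataclifford} and \ref{stabledown}, just as in your argument. You instead prove (i) $\Rightarrow$ (ii) \emph{$t$-locally}: each $D_M$ is an integrally closed local Clifford regular domain, hence a valuation domain by \cite[Theorem 3]{BS}; then $D_M(X)$ is a quasi-local Bezout domain, hence a valuation domain, which settles the Clifford regular case; and you correctly identify the real obstacle in the stable case---a valuation domain need not be stable---and bridge it with Proposition \ref{local}, upgrading Clifford regularity of $D_M(X)$ to stability because $D_M$ is local and stable. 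Both arguments exploit the same phenomenon (integral closedness plus regularity forces Pr\"ufer-like behaviour), but the paper uses it globally (P$v$MD, so all ideals of the Nagata ring are extended), while you use it at each $t$-maximal ideal. The paper's route is shorter and avoids any analysis of the local rings $D_M(X)$; yours avoids Kang's extension theorem, makes the local structure explicit (each $D_M(X)$ is a valuation domain), and illustrates precisely the situation Proposition \ref{local} was designed for. Every step of your argument is justified by results actually available in the paper, so it stands as a complete alternative proof.
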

 \begin{proof} (i) $\ra$ (iii) Since an integrally closed Clifford $w$-regular domain is a P$v$MD \cite[Corollary 4.5]{GP1}, each ideal of $\Na(D,v)$ is extended from $D$ \cite[Theorem 3.1]{K}. So, by Proposition \ref{extended}, if $D$ is Clifford  $w$-regular (resp.,  $w$-stable), $\Na(D,v)$ is Clifford  regular (resp.,  stable).

 (iii) $\ra$ (ii) It is always true for stability (Proposition \ref{nagatastable}) and it is true under the ``integrally closed'' hypothesis for Clifford regularity (Proposition \ref{nagataclifford}).

 (ii) $\ra$ (i)  is Proposition \ref{stabledown}.
  \end{proof}

	Recall that a \emph{Mori domain} is a domain with the ascending chain condition on integral divisorial ideals.
	For the main properties of Mori domains, one can see the survey  \cite{Bar} and the references there.

Since divisorial ideals are $w$-ideals,  strong Mori domains (i.e., domains satisfying the ascending chain condition on integral $w$-ideals) are Mori. More precisely, it follows from \cite[Theorem 1.9]{W} that
$D$ is strong Mori if and only if $D$ is Mori and $D_M$ is Noetherian for each $M\in \tmax(D)$.

Stability and Clifford regularity of Mori domains were studied in \cite{GP2}, in the more general setting of star operations; in particular, it was proved there  that $w$-stability and Clifford $w$-regularity coincide for strong Mori domains \cite[Corollary 3.11]{GP2}. But these two notions  are indeed equivalent for all Mori domains; this follows from a recent result  in \cite{GR}.

\begin{theorem} \label{morionedim}  \cite{GR}
Let $D$ be an integral domain. The following conditions are equivalent:
\begin{enumerate}
\item[(i)] $D$ is stable and one-dimensional;
\item[(ii)] $D$ is Mori and stable;
\item[(iii)] $D$ is Mori and finitely stable.
\end{enumerate}
\end{theorem}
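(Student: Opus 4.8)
The plan is to run the cycle (i) $\ra$ (ii) $\ra$ (iii) $\ra$ (i). The implication (ii) $\ra$ (iii) is immediate, since a stable ideal is in particular invertible over its endomorphism ring, so a stable domain is finitely stable. Everything else rests on three facts, each of which I would reduce as far as possible to the local case: (I) a one-dimensional stable domain is Mori; (II) a Mori stable domain is one-dimensional; (III) a Mori finitely stable domain is stable. Indeed (i) $\ra$ (ii) is (I), while (iii) $\ra$ (i) follows by first applying (III) to obtain stability and then (II) to obtain one-dimensionality. The reductions use three standard transfer properties: both stability and finite stability pass to the localizations $D_P$ (the endomorphism ring of a finitely generated ideal commutes with localization); every ring of fractions of a Mori domain is again Mori; and $\dim D=\sup\{\dim D_M : M\ \text{maximal}\}$.

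The core is the dimension bound, which I would prove locally. Fix a maximal ideal $M$ and put $R:=D_M$, a local Mori domain that is stable (resp.\ finitely stable). Since finitely stable domains have Pr\"ufer integral closure, $\ol R$ is a Pr\"ufer domain, and by Olberding's structure theory of local (finitely) stable domains \cite{O3} the integral closure is a fractional overring, i.e.\ the conductor $(R:\ol R)$ is nonzero. Now I invoke the theory of Mori domains \cite{Bar}: a fractional overring of a Mori domain is again Mori, so $\ol R$ is a Mori Pr\"ufer domain, hence Dedekind, hence one-dimensional. As $R\sub\ol R$ is an integral extension, $\dim R=\dim\ol R=1$. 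Taking the supremum over $M$ gives $\dim D=1$, which yields (II) and the one-dimensionality required in (iii) $\ra$ (i).

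To complete (III) I verify Olberding's criterion (Theorem \ref{Olb}) for the Mori finitely stable domain $D$, now known to be one-dimensional: condition (a) is the hypothesis; condition (c) is vacuous, since there are no nonzero non-maximal primes; condition (d) holds because a Mori domain has $t$-finite character, which coincides with ordinary finite character in dimension one; and condition (b) reduces to the stability of the maximal ideal of each one-dimensional local Mori finitely stable domain $D_M$, which I would extract from finite stability together with the fact that in such a ring (where $v=t$) the maximal ideal is divisorial and $v$-finite. For (I), let $D$ be stable of dimension one. By Olberding's theory a stable domain has strongly discrete Pr\"ufer integral closure of finite character; in dimension one this makes $\ol D$ a Krull, hence Dedekind, domain. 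Since the conductor is nonzero and $D$ is stable, the structure theory of one-dimensional Mori domains \cite{Bar} then transfers the Mori property down from $\ol D$ to $D$; assembling the local information with the finite character of $D$ gives that $D$ itself is Mori.

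The main obstacle is twofold and lies entirely in the local-to-global passage for the Mori property. First, unlike stability, the Mori property is not governed by localizations alone, so the downward transfer in (I)---deducing that $D$ is Mori from the Mori-ness of its Dedekind integral closure and the nonvanishing of the conductor---must be handled by the finer structure theory of one-dimensional Mori domains rather than by a naive local check. Second, the whole argument hinges on the compatibility between Olberding's structure theory of (local) stable domains (Pr\"ufer, strongly discrete integral closure with nonzero conductor and finite character) and the behaviour of Mori domains under fractional overrings and complete integral closure; verifying that $\ol R$ is a fractional overring in the merely finitely stable case, and that the Mori property indeed propagates between $R$ and $\ol R$ in both directions, are the steps I expect to require the most care.
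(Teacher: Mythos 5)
First, a point of comparison: the paper itself offers no proof of this theorem --- it is quoted from the manuscript \cite{GR} --- so your proposal must stand on its own. It does not: it contains a step that is not merely delicate but false.

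Your entire argument pivots on the claim that for a local (finitely) stable domain $R$, ``Olberding's structure theory'' gives $(R:\ol R)\neq(0)$, so that $\ol R$ is a fractional overring and the Mori property can be passed between $R$ and $\ol R$ (downward in your (I), upward in your (II), and through (II) in your (III)). No such theorem exists, and the paper itself documents this: Theorem~\ref{SVOlb} is stated under the explicit hypothesis $(D:D^\prime)\neq(0)$ precisely because that hypothesis can fail, and the final line of the paper points to explicit examples in \cite[Section 3]{O7} of local one-dimensional stable domains with $(D:D^\prime)=(0)$. Worse, your argument refutes itself against Olberding's examples \cite{O6}, mentioned immediately after the statement of Theorem~\ref{morionedim}: these are local, one-dimensional, stable and \emph{not} Noetherian. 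Run your (I) on such a domain: it would have Dedekind integral closure with nonzero conductor, hence by Theorem~\ref{SVOlb} it would be $2$-generated, hence Noetherian --- a contradiction. So the conductor is zero in exactly the cases that carry the real content of the theorem (the non-Noetherian Mori stable domains), and any correct proof must handle the zero-conductor case by other means; that is presumably the hard core of \cite{GR}. Your peripheral reductions are fine --- stability and finite stability localize, rings of fractions of Mori domains are Mori, and in dimension one every maximal ideal is a $t$-ideal so that $t$-finite character equals finite character and Olberding's criterion (Theorem~\ref{Olb}) can be invoked --- but all three of your facts (I), (II), (III) funnel through the broken conductor step, so none of them is actually established.
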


A class of examples of  local one-dimensional domains which are stable and not Noetherian  has been constructed  by Olberding   \cite[Theorems 4.1 and 4.4]{O6} (see also \cite[Theorem 3.10]{O7}); by Theorem \ref{morionedim}, all these domains must be Mori.

	 \begin{theorem} \label{Mori1} Let $D$ be an integral domain.
 The following
conditions are equivalent:
	\begin{itemize}
		\item[(i)]  $D$ is  $w$-stable of $t$-dimension one;
		\item[(ii)]  $D$ is  Mori and $w$-stable;
 \item[(iii)] $D$ is Mori and Clifford $w$-regular.
  \end{itemize}
	\end{theorem}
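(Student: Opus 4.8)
The plan is to prove the three-way equivalence by exploiting the two characterizations already available: the local-global theory of stability (Theorem \ref{Olb} and its $w$-analogue via \cite[Corollary 1.10]{GP}) and the newly imported Theorem \ref{morionedim}, which collapses stability, finite stability, and one-dimensionality in the Mori setting. Since $w$-stability and Clifford $w$-regularity are both governed locally by the corresponding property at each $t$-maximal localization $D_M$, the strategy is to reduce every implication to a statement about the local rings $D_M$, apply the global (non-star) Theorem \ref{morionedim} there, and then lift back using the $t$-local characterizations together with $t$-finite character.

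First I would prove (i) $\Leftrightarrow$ (ii). A Mori domain has $t$-finite character, and by \cite[Theorem 1.9]{W}-type reasoning each $D_M$ is Mori; conversely a $w$-stable domain already has $t$-finite character by \cite[Corollary 1.10]{GP}, and each $D_M$ is stable. The key point is that for the local rings, Theorem \ref{morionedim} says ``Mori and stable'' is the same as ``stable and one-dimensional.'' So if $D$ is Mori and $w$-stable, each $D_M$ is Mori and stable, hence one-dimensional, which forces $D$ to have $t$-dimension one (every $t$-prime lies in some $M$ and becomes a nonzero prime of the one-dimensional $D_M$). Conversely, if $D$ is $w$-stable of $t$-dimension one, each $D_M$ is stable and one-dimensional, hence Mori by Theorem \ref{morionedim}; combining the local Mori property with $t$-finite character gives that $D$ is Mori. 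The care needed here is to verify that $t$-dimension one is exactly the global shadow of one-dimensionality of the localizations, and that the Mori property glues from the $D_M$ under $t$-finite character.

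Next I would establish (ii) $\Leftrightarrow$ (iii). The implication (ii) $\Rightarrow$ (iii) is immediate since $w$-stable always implies Clifford $w$-regular (as noted in the Preliminaries). For (iii) $\Rightarrow$ (ii), suppose $D$ is Mori and Clifford $w$-regular. Then each $D_M$ is Clifford regular (by \cite[Corollary 2.13]{GP1}), and Clifford regular domains are finitely stable; since $D_M$ is Mori, Theorem \ref{morionedim} upgrades finite stability to full stability at the local level, so each $D_M$ is stable. With $t$-finite character (from the Mori hypothesis, or equivalently from Clifford $w$-regularity via \cite[Theorem 5.2]{GP1}), the $t$-local characterization \cite[Corollary 1.10]{GP} yields that $D$ is $w$-stable.

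The main obstacle will be the passage between the global $t$-theoretic invariants and the genuinely local (ideal-theoretic, non-star) invariants of $D_M$: specifically, confirming that ``$t$-dimension one for $D$'' corresponds precisely to ``$D_M$ one-dimensional for every $t$-maximal $M$,'' and that the Mori property descends to and ascends from the $D_M$ correctly. Once Theorem \ref{morionedim} is available to trivialize the local equivalences, the entire argument becomes a matter of checking these localization compatibilities; the subtle step is ensuring that the Clifford $w$-regular hypothesis in (iii) delivers \emph{finite} stability locally, which is exactly the hinge that Theorem \ref{morionedim} needs in order to conclude full stability in the Mori case.
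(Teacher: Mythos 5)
Your proposal is correct and follows essentially the same route as the paper's proof: localize at the $t$-maximal ideals, apply Theorem \ref{morionedim} to each $D_M$, and globalize via $t$-finite character together with the $t$-local characterizations of $w$-stability \cite[Corollary 1.10]{GP}, Clifford $w$-regularity \cite[Corollary 2.13]{GP1}, and the Mori property. The only difference is bookkeeping --- you prove the two pairwise equivalences (i) $\Leftrightarrow$ (ii) and (ii) $\Leftrightarrow$ (iii), while the paper runs the cycle (i) $\Rightarrow$ (ii) $\Rightarrow$ (iii) $\Rightarrow$ (i) --- but the key lemmas and the reduction strategy are identical.
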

\begin{proof} (i) $\ra$ (ii) For each $M\in \tmax(D)$, $D_M$ is stable and one-dimensional; hence it is  Mori by Theorem \ref{morionedim}. By $w$-stability, $D$ has $t$-finite character \cite[Corollary 1.10]{GP} and this implies that $D$ is Mori \cite[Theorem 2.4]{Bar}.

(ii) $\ra$ (iii)
is clear.

(iii) $\ra$ (i) For each $M\in \tmax(D)$, $D_M$ is Mori and Clifford regular; hence finitely stable \cite[Proposition 2.3]{B3}. By Theorem \ref{morionedim}, $D_M$ is stable. Since a Mori domain has $t$-finite character \cite[Theorem 3.3]{Bar},  $D$ is $w$-stable  \cite[Corollary 1.10]{GP}.
\end{proof}

Since $w$-stable domains of $t$-dimension one are weakly Matlis (Corollary \ref{UMT1}(2)),  Theorem \ref{Mori2} can be restated for Mori domains.

\begin{theorem} \label{cormori} Let $D$ be a $w$-stable Mori domain. The following conditions are equivalent:
	\begin{itemize}
		\item[(i)]  $D[X]$ is  $w$-stable;
		\item[(ii)] $D[X]$ is  Clifford $w$-regular;			
		\item[(iii)] $D_{M}(X)$ is stable for each $M\in \tmax(D)$.
	\end{itemize}
	In addition, under (any one of) these conditions, $D[X]$ is a Mori domain.
	\end{theorem}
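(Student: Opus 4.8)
The plan is to deduce Theorem \ref{cormori} almost entirely from results already assembled in the preceding section, treating it as a specialization of Theorem \ref{Mori2} to the Mori setting. The crucial preliminary observation is that a $w$-stable Mori domain has $t$-dimension one: indeed, by Theorem \ref{Mori1} the hypotheses ``Mori and $w$-stable'' are equivalent to ``$w$-stable of $t$-dimension one''. Once I know $D$ has $t$-dimension one, Corollary \ref{UMT1}(2) tells me that $D$ is weakly Matlis (here I use that $w$-stability implies Clifford $w$-regularity, so the Clifford $w$-regular hypothesis of that corollary is met). Therefore $D$ is a $w$-stable weakly Matlis domain, which is exactly the hypothesis of Theorem \ref{Mori2}.

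With this reduction in place, the equivalence of (i), (ii), and (iii) of the present statement follows immediately: these are precisely conditions (i), (ii), and (v) of Theorem \ref{Mori2}, whose six conditions are all equivalent. So the first half of the proof is a one-line citation once the weakly Matlis property is established. I would write the argument in the order: first invoke Theorem \ref{Mori1} to record $t$-dimension one; then invoke Corollary \ref{UMT1}(2) to get weakly Matlis; then quote Theorem \ref{Mori2}.

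The second, genuinely new, assertion is that under any of these equivalent conditions $D[X]$ is a Mori domain. The natural route is to apply Theorem \ref{Mori1} in the polynomial ring $D[X]$: if I can show $D[X]$ is $w$-stable of $t$-dimension one, then by the equivalence (i) $\lra$ (ii) of Theorem \ref{Mori1} (applied to $D[X]$ in place of $D$), $D[X]$ is Mori. Condition (i) of the current theorem already gives that $D[X]$ is $w$-stable, so the only thing left to verify is that $D[X]$ has $t$-dimension one. For this I would use Corollary \ref{UMT1}(2) once more, but now reading off its conclusion: since $D$ is Clifford $w$-regular of $t$-dimension one, that corollary states directly that $D[X]$ is weakly Matlis of $t$-dimension one.

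The step I expect to require the most care is precisely this last transfer of $t$-dimension one to $D[X]$, because it is where the structure of the non-extended primes (the uppers to zero) matters. The relevant content is packaged in Corollary \ref{UMT1}(2): its proof already checks that every upper to zero of $D[X]$ is $t$-maximal of height one, and that every extended prime $t$-ideal has height one, so that no $t$-prime of $D[X]$ can have height exceeding one. I would simply cite this conclusion rather than reprove it. Thus the whole argument chains together as: $D$ Mori and $w$-stable $\Rightarrow$ $D$ is $w$-stable of $t$-dimension one and weakly Matlis $\Rightarrow$ Theorem \ref{Mori2} gives the equivalences, while Corollary \ref{UMT1}(2) gives that $D[X]$ has $t$-dimension one $\Rightarrow$ Theorem \ref{Mori1} applied to $D[X]$ yields that $D[X]$ is Mori.
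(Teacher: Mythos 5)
Your proposal is correct and follows essentially the same route as the paper's own proof: invoke Theorem \ref{Mori1} to get $t$-dimension one, then Corollary \ref{UMT1}(2) for the weakly Matlis property so that Theorem \ref{Mori2} applies, and finally use Corollary \ref{UMT1}(2) again (to transfer $t$-dimension one to $D[X]$) together with Theorem \ref{Mori1} applied to $D[X]$ to conclude that $D[X]$ is Mori. The only difference is that you spell out details the paper leaves implicit, such as matching conditions (i), (ii), (iii) with conditions (i), (ii), (v) of Theorem \ref{Mori2} and noting that $w$-stability supplies the Clifford $w$-regularity hypothesis of Corollary \ref{UMT1}.
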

	\begin{proof} A $w$-stable Mori domain has $t$-dimension one (Theorem \ref{Mori1}) and so it is  weakly Matlis by Corollary \ref{UMT1}(2). Hence Theorem \ref{Mori2} holds for $w$-stable Mori domains.
	
In addition, if $D$ is $w$-stable of $t$-dimension one, also $D[X]$ has $t$-dimension one (Corollary \ref{UMT1}(2)). Thus,  if $D[X]$ is $w$-stable, $D[X]$ is Mori  by Theorem \ref{Mori1}.
	\end{proof}
	
Hence, when $D$ is Mori,  a necessary condition for $D[X]$ being $w$-stable is that  $D[X]$ is Mori.
	Even though a polynomial ring over a Mori domain need not be Mori \cite[Section 6]{Bar}, we do not know what happens in $t$-dimension one. However, if $D$ is strong Mori (i.e., Mori and $t$-locally Noetherian), also $D[X]$ is strong Mori  \cite[Theorem 1.13]{W}. Thus, in view of Theorem \ref{cormori},  we ask:
	
	\begin{Qu} Let $D$ be a stable local Noetherian domain. Is $D(X)$ stable? \end{Qu}
	
	The answer to this question is positive under the additional hypothesis that $D$ is  divisorial (i.e., each nonzero ideal is divisorial). It follows from the definitions that a  Mori divisorial domain is Noetherian.
	
		We recall that a  domain $D$ is stable and divisorial if and only if it is totally divisorial (i.e., each overring of $D$ is divisorial) \cite[Theorem 3.12]{O5}. When $D$ is  Mori,  $D$ is  stable and divisorial (equivalently, totally divisorial) if and only it is Noetherian \emph{2-generated}, that is, each ideal can be generated by two elements \cite[Theorem 3.1]{O5}. There are several examples of stable Noetherian domain that are not 2-generated  \cite[Section 3]{O7}; a first example was given in \cite[Example 5.4]{SV74}.
			
	 \begin{proposition} \label{2gen}
 Let $D$ be a local Noetherian domain. The following
conditions are equivalent:
	\begin{itemize}
		\item[(i)]  $D$ is  totally divisorial;
		\item[(ii)]  $D$ is  $2$-generated;
		\item[(iii)]  $D(X)$ is $2$-generated;
 \item[(iv)] $D(X)$ is  totally divisorial.
  \end{itemize} \end{proposition}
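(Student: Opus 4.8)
The plan is to reduce the whole equivalence to Olberding's characterization of Noetherian totally divisorial domains and then to transfer the two defining ingredients, ``stable'' and ``divisorial'', separately from $D$ to $D(X)$. First I would record the structure of the Nagata ring. Since $D$ is local with maximal ideal $M$, we have $N(d)=D[X]\sm M[X]$, so $D(X)=D[X]_{M[X]}$ is local with maximal ideal $M(X)=MD(X)$; by Proposition \ref{D(X)}(1) it is Noetherian, hence Mori. Thus \emph{both} $D$ and $D(X)$ are local Noetherian domains, and Olberding's theorem \cite[Theorem 3.1]{O5} applies to each: a Mori (in particular Noetherian) domain is totally divisorial if and only if it is $2$-generated. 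This gives (i)$\lra$(ii) on $D$ and (iii)$\lra$(iv) on $D(X)$ at once, so only the bridge between the two pairs remains, which I will phrase as (ii)$\lra$(iii).

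The easy direction is (iii)$\ra$(ii). The extension $D\sub D(X)$ is faithfully flat and $D(X)/M(X)=k(X)$ is a field extension of $k=D/M$, so for every ideal $I$ of $D$ Nakayama's lemma gives $\mu_D(I)=\dim_k(I/MI)=\dim_{k(X)}(ID(X)/M(X)ID(X))=\mu_{D(X)}(ID(X))$. Hence if every ideal of $D(X)$ is $2$-generated, then so is every extended ideal, and therefore every ideal of $D$ is $2$-generated.

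For the converse (ii)$\ra$(iii) I would pass through (iv) and show that $D$ totally divisorial forces $D(X)$ totally divisorial, splitting this via \cite[Theorem 3.12]{O5} (totally divisorial $=$ stable and divisorial). Divisoriality transfers cleanly: $D$ is $w$-divisorial (it is a divisorial $DW$-domain, so $w=v$) and Mori, hence by the transfer result of \cite{GHP} the polynomial ring $D[X]$ is $w$-divisorial; since $M[X]$ is a $t$-maximal ideal of $D[X]$ (Lemma \ref{lemmapoli}), localizing and applying \cite[Theorem 1.5]{GE} shows that $D(X)=D[X]_{M[X]}$ is divisorial. For stability I would invoke Proposition \ref{local}: since $D$ is local and stable, $D(X)$ is stable as soon as its non-extended finitely generated ideals are stable, the remaining hypotheses of Theorem \ref{Olb} holding automatically (exactly as in the proof of Proposition \ref{local}, because $D$ is a stable $UMT$-domain whose extended ideals are stable by Proposition \ref{extended}). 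Once $D(X)$ is both stable and divisorial it is totally divisorial by \cite[Theorem 3.12]{O5}, hence $2$-generated by \cite[Theorem 3.1]{O5}, which is (iv), and then (iii).

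The main obstacle is precisely the stability of $D(X)$, that is, controlling the non-extended finitely generated ideals of $D(X)=D[X]_{M[X]}$: this is the content of the open Problem for a general stable local Noetherian $D$, and the divisorial hypothesis is exactly what I expect to supply the missing leverage. Concretely, I would exploit that integral closure commutes with the Nagata construction, $\overline{D(X)}=\overline{D}(X)$, that $\overline D$ is module-finite over the $2$-generated domain $D$ and is a (semilocal) $PID$, so that $\overline{D(X)}=\overline D(X)$ is again a $PID$ (Proposition \ref{D(X)}(3)) and module-finite over $D(X)$; combined with the already-established divisoriality of $D(X)$, this should force every finitely generated ideal of $D(X)$ to be $2$-generated. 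Carrying out this last step for the non-extended ideals is where the real work lies.
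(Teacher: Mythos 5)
Your skeleton agrees with the paper's: both proofs dispose of (i)$\lra$(ii) and (iii)$\lra$(iv) by Olberding's theorem \cite[Theorem 3.1]{O5} applied to the two local Noetherian domains $D$ and $D(X)$, and both reduce everything to the bridge (ii)$\lra$(iii). Your direction (iii)$\ra$(ii) via faithful flatness and Nakayama is correct and unobjectionable. The problem is (ii)$\ra$(iii), which you do not actually prove: you route it through stability of $D(X)$ via Proposition \ref{local}, correctly observe that everything hinges on the non-extended finitely generated ideals of $D(X)$ (this is exactly the open Problem stated in the paper), and then offer a plan whose key step you yourself flag as ``where the real work lies.'' A proof proposal cannot end there; as it stands, the hard implication is missing.

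Worse, the leverage you hope to extract from divisoriality rests on a false premise. You assert that $\overline{D}$ is module-finite over the $2$-generated domain $D$. This fails in general: the paper itself points out (citing \cite[Section 3]{O7}) that there exist local one-dimensional \emph{$2$-generated} domains with $(D:D^\prime)=(0)$, and zero conductor forces $D^\prime$ to be non-finitely-generated as a $D$-module (a finite module $D^\prime$ always has nonzero conductor). Note that Theorem \ref{SVOlb} goes in the opposite direction from what you need: nonzero conductor implies $2$-generated, not conversely. So your strategy can only work in the nonzero-conductor case, which is precisely the case already handled by the Corollary following Theorem \ref{SVOlb}; the genuinely hard case is out of its reach. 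The paper bridges (ii)$\lra$(iii) by an entirely different and complete argument that sidesteps stability altogether: $D$ and $D(X)$ have the same multiplicity \cite[p.214 and Lemma 8.4.2(6)]{swanson}, and a local one-dimensional Noetherian domain is $2$-generated if and only if it has multiplicity $2$ \cite[Lemma 3.1]{SV74} (one-dimensionality being automatic here, since $2$-generated, or totally divisorial hence stable Noetherian, forces dimension one). You may want to note that this multiplicity argument is exactly the kind of invariant-transfer tool that makes the Noetherian divisorial case tractable while the general stable Noetherian case remains open.
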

 \begin{proof}
 Note that $D(X)$ is Noetherian by Proposition \ref{D(X)}.

 (i) $\lra$ (ii) and (iii) $\lra$ (iv) by \cite[Theorem 3.1]{O5}.

 (ii) $\lra$ (iii)
$D$ and $D(X)$ have the same multiplicity \cite[p.214 and Lemma 8.4.2(6)]{swanson} and  a local one-dimensional Noetherian domain has multiplicity $2$ if and only if it is $2$-generated \cite[Lemma 3.1]{SV74}.
 \end{proof}

 When $D$ is Mori,  $D$ is $w$-stable and  $w$-divisorial if and only $D_M$ is Noetherian and totally divisorial, for each $M\in \tmax(D)$  \cite[Corollary 3.6]{GP1}.
 In particular, a $w$-divisorial $w$-stable Mori domain is strong Mori.

By  \cite[Proposition 3.6]{GHP}, if $D$ is Mori and $w$-divisorial, $D[X]$ is $w$-divisorial.
 We  now show that, if in addition $D$ is $w$-stable,  $D[X]$ is also  $w$-stable.

 \begin{theorem} \label{Mori3} Let $D$ be a Mori domain. The following conditions are equivalent:
	\begin{itemize}
        \item[(i)] $D$ is $w$-stable and $w$-divisorial;
		\item[(ii)]  $D[X]$ is $w$-stable and $w$-divisorial;
        \item[(iii)] $Na(D,v)$ is totally divisorial;
		\item[(iv)]  $D_M(X)$ is 2-generated (equivalently, Noetherian totally divisorial), for each $M\in \tmax(D)$.
\end{itemize}
\end{theorem}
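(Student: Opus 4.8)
The plan is to prove the three links (i) $\Leftrightarrow$ (iv), (i) $\Leftrightarrow$ (ii) and (iii) $\Leftrightarrow$ (iv); since (i), (ii), (iv) will be shown mutually equivalent, the last link attaches (iii) to the rest. The unifying idea is that every one of the four conditions forces $D$ to be strong Mori, so that all the local rings involved are Noetherian and the dictionary ``$2$-generated $=$ Noetherian totally divisorial'' of Proposition \ref{2gen} becomes available fibrewise.

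For (i) $\Leftrightarrow$ (iv) I would start from \cite[Corollary 3.6]{GP1}: a Mori domain $D$ is $w$-stable and $w$-divisorial precisely when $D_M$ is Noetherian and totally divisorial for each $M \in \tmax(D)$. For such an $M$, Proposition \ref{2gen} applied to the local Noetherian domain $D_M$ gives $D_M$ totally divisorial $\Leftrightarrow$ $D_M$ is $2$-generated $\Leftrightarrow$ $D_M(X)$ is $2$-generated $\Leftrightarrow$ $D_M(X)$ is totally divisorial, which is exactly (iv). In the direction (iv) $\Rightarrow$ (i) one first observes that a $2$-generated $D_M(X)$ is Noetherian, so $D_M$ is Noetherian by Proposition \ref{D(X)}, after which Proposition \ref{2gen} returns $D_M$ Noetherian and totally divisorial for every $M$, and (i) follows since $D$ is Mori.

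For (i) $\Leftrightarrow$ (ii): the implication (i) $\Rightarrow$ (ii) splits as $w$-divisoriality of $D[X]$, which is \cite[Proposition 3.6]{GHP}, and $w$-stability of $D[X]$, which follows from Theorem \ref{cormori} once we know (via (iv)) that each $D_M(X)$ is totally divisorial, hence stable. Conversely, $w$-stability descends by Proposition \ref{stabledown}, and $w$-divisoriality descends because Lemma \ref{lemmapoli}(1) yields $I^w[X] = (I[X])^{w'} = (I[X])^{v'} = I^v[X]$ for every ideal $I$ of $D$, whence $I^w = I^v$.

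The link (iii) $\Leftrightarrow$ (iv) is where the argument really bites, since total divisoriality passes neither up nor down along localization in any automatic way; in each direction one must secure the Noetherian property of $\Na(D,v)$ by a separate route. For (iv) $\Rightarrow$ (iii), the already proved equivalence with (i) makes $D$ a $w$-stable $w$-divisorial Mori domain, hence strong Mori; then $D[X]$ is strong Mori \cite[Theorem 1.13]{W}, so its ring of fractions $\Na(D,v)$ is strong Mori, and being a DW-domain (Proposition \ref{Nagata}) it is in fact Noetherian. As each $\Na(D,v)_{M\Na(D,v)} = D_M(X)$ is $2$-generated by (iv), and the minimal number of generators of an ideal of a Noetherian domain is the supremum of the local ones, $\Na(D,v)$ is $2$-generated, hence totally divisorial. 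For (iii) $\Rightarrow$ (iv), total divisoriality makes $\Na(D,v)$ stable \cite[Theorem 3.12]{O5}, so $D$ is $w$-stable by Proposition \ref{nagatastable} and thus of $t$-dimension one by Theorem \ref{Mori1}; consequently each $D_M$, and each $D_M(X) = \Na(D,v)_{M\Na(D,v)}$, is one-dimensional, so $\Na(D,v)$ is one-dimensional. A one-dimensional stable domain is Mori by Theorem \ref{morionedim}, and a Mori totally divisorial domain is Noetherian and $2$-generated; localizing at the maximal ideals then shows each $D_M(X)$ is Noetherian and totally divisorial, which is (iv). The main obstacle, as this last paragraph shows, is precisely the control of (iii): the Noetherianity of $\Na(D,v)$ must be extracted from strong Mori-ness plus the DW-property in one direction, and from ``one-dimensional $+$ stable $\Rightarrow$ Mori'' (Theorem \ref{morionedim}) in the other, before Proposition \ref{2gen} can translate between the $2$-generated and totally divisorial conditions.
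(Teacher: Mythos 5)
Your overall architecture is sound, and the cycle (i) $\Leftrightarrow$ (ii) $\Leftrightarrow$ (iv) is essentially the paper's own argument: (i) $\Leftrightarrow$ (iv) via \cite[Corollary 3.6]{GP1} and Proposition \ref{2gen}, (i)+(iv) $\Rightarrow$ (ii) via \cite[Proposition 3.6]{GHP} and Theorem \ref{cormori}, and descent via Proposition \ref{stabledown} (your direct computation $I^w[X]=(I[X])^{w'}=(I[X])^{v'}=I^v[X]$ for the descent of $w$-divisoriality is a fine substitute for citing \cite[Proposition 3.6]{GHP} again). The genuine gap is in your treatment of (iii), where you depart from the paper (which proves (ii) $\Leftrightarrow$ (iii) directly from Proposition \ref{nagatastable}, \cite[Proposition 3.2]{FHP} and \cite[Theorem 3.12]{O5}). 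In your direction (iv) $\Rightarrow$ (iii), the lemma you invoke --- ``the minimal number of generators of an ideal of a Noetherian domain is the supremum of the local ones'' --- is false: in a Dedekind domain with nontrivial class group every ideal is locally principal, yet non-principal ideals need two generators. What rescues the step is that $\Na(D,v)$ is here \emph{one-dimensional} Noetherian (its localizations are the one-dimensional rings $D_M(X)$, since $D$ has $t$-dimension one by Theorem \ref{Mori1}), and for a one-dimensional Noetherian domain the Forster--Swan bound $\mu(I)\leq \max_{P}\{\mu_P(I_P)+\dim(R/P)\}$ gives $\mu(I)\leq 2$ when all localizations are $2$-generated; but you neither record the dimension-one fact at this point nor invoke any such theorem. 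Alternatively you could avoid generator counting altogether: $\Na(D,v)$ is locally totally divisorial, hence locally stable and locally divisorial; it has finite character (Lemma \ref{finchar}) and is one-dimensional, hence $h$-local, so it is stable \cite[Corollary 1.10]{GP} and divisorial, whence totally divisorial by \cite[Theorem 3.12]{O5}.

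Two smaller points in the same part of your argument. First, in (iii) $\Rightarrow$ (iv) you pass from ``each $D_M$ is one-dimensional'' to ``each $D_M(X)$ is one-dimensional'' with no justification; since $D_M$ need not be Noetherian a priori, this is not automatic ($\dim D(X)$ can exceed $\dim D$ in general). It is true here because a local stable domain is quasi-Pr\"ufer, so every prime of $D_M(X)$ is extended from $D_M$ by \cite[Theorem 3.1]{HZ} --- exactly the argument used in the proof of Proposition \ref{local} --- but this must be said. Second, your appeal to ``a ring of fractions of a strong Mori domain is strong Mori'' is a correct fact from \cite{W}, but it is not among the results the paper quotes; you could instead get Noetherianity of $\Na(D,v)$ from the characterization of \cite[Theorem 1.9]{W}: its localizations $D_M(X)$ are Noetherian, it has finite character by Lemma \ref{finchar}, and it is a DW-domain by Proposition \ref{Nagata}. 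With these repairs your (iii) $\Leftrightarrow$ (iv) link works, but as written the key local-to-global step rests on a false statement.
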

\begin{proof}
(i) $\Leftrightarrow$ (iv) $D$ is $w$-stable and $w$-divisorial if and only if, for every  $t$-maximal ideal $M$ of $D$, $D_M$ is Noetherian and totally divisorial \cite[Corollary 3.6]{GP1},  if and only if $D_M(X)$ is $2$-generated for every $t$-maximal ideal $M$ of $D$ (Proposition \ref{2gen}).

(iv)+(i) $\ra$ (ii) $D[X]$ is $w$-divisorial by \cite[Proposition 3.6]{GHP} and it is $w$-stable by Theorem \ref{cormori}.

(ii) $\ra$ (i) $D$ is $w$-stable by Proposition \ref{stabledown} and $w$-divisorial by \cite[Proposition 3.6]{GHP}.

(ii) $\Leftrightarrow$ (iii) It follows from Proposition \ref{nagatastable}, \cite[Proposition 3.2]{FHP} and \cite[Theorem 3.12]{O5}.
\end{proof}

The problem of establishing whether more generally $w$-stability of Mori domains transfers to polynomial rings can be similarly reduced  to the local case (Theorem \ref{cormori}); that is, it can be reduced to investigate  the stability of $D(X)$ when $D$ is a local stable Mori domain, equivalently, a local stable one-dimensional domain (Theorem \ref{morionedim}). Our previous results and a theorem of Olberding show that one has only to consider the case when the conductor of the integral closure $D^\prime$ is zero.

\begin{theorem} \label{SVOlb} \cite[Proposition 4.5]{O1}  Let $D$ be a one-dimensional stable domain.  If $(D:D^\prime)\neq (0)$, $D$ is 2-generated and $D^\prime$ is a finitely generated $R$-module.
\end{theorem}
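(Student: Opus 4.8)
The plan is to pin down the integral closure $D^\prime$, prove it is module-finite over $D$, deduce that $D$ is Noetherian, and only then extract $2$-generation from a multiplicity computation.

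First I would use that $D$ has finite character (Theorem \ref{Olb}) to localize: at a maximal ideal $N$ not containing the conductor $\mathfrak c:=(D:D^\prime)$ there is $s\in\mathfrak c\sm N$ with $sD^\prime\sub D$, so $D^\prime\sub D_N$ and $D_N$ is integrally closed, leaving nothing to prove; thus it suffices to treat the finitely many maximal ideals $M\supseteq\mathfrak c$, and I may assume $D$ local. Since every overring of a stable domain is stable \cite{O1}, $D^\prime$ is an integrally closed stable domain, hence a strongly discrete Pr\"ufer domain with finite character \cite[Theorem 4.6]{O3}; being one-dimensional and (after localizing) semilocal, $D^\prime$ is a principal ideal domain.

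Next, module-finiteness. As $\mathfrak c\ne(0)$ is a common ideal of $D$ and of the Dedekind domain $D^\prime$, the quotient $D^\prime/\mathfrak c$ is Artinian, and since $\mathfrak c\sub D\sub D^\prime$ the ring $D^\prime$ is a finitely generated $D$-module as soon as $D^\prime/\mathfrak c$ is finitely generated over $D$ (lift finitely many generators and adjoin $1$). So everything reduces to bounding the finite-length $D^\prime$-module $D^\prime/\mathfrak c$ as a module over $D$, i.e.\ to showing that each residue extension $D/M\hookrightarrow D^\prime/M^\prime$ is finite. This is where I expect the genuine difficulty to lie, and where the full force of stability---not merely the stability of $M$---must be used. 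The concrete handle is that, $M$ being stable, $M=x\,E(M)$ and $M^2=xM$ with $E(M)=(M:M)$, so that $M/M^2\cong E(M)/D$ as $D/M$-vector spaces and the residue data is governed by $\ell_D(E(M)/D)$; the heart of the matter is to exploit the stability of \emph{all} ideals, through the local principal-ideal structure of $D^\prime$, to force $\ell_D(E(M)/D)=1$. Granting this, $D^\prime/\mathfrak c$ has finite length over $D$, so $D^\prime$ is module-finite, which is the second assertion.

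Finally, once $D^\prime$ is a finitely generated $D$-module and $D^\prime$ is Noetherian, the Eakin--Nagata theorem makes $D$ Noetherian. With $D$ now a one-dimensional Noetherian local stable domain, the reduction $M^2=xM$ gives $e(D)=\ell(D/xD)=1+\ell_D(E(M)/D)=2$, so $D$ has multiplicity two; by the Sally--Vasconcelos lemma \cite[Lemma 3.1]{SV74} every ideal of a one-dimensional Noetherian local domain of multiplicity two is $2$-generated, and patching over the finitely many relevant maximal ideals (finite character) shows that $D$ is $2$-generated.
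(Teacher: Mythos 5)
First, a point of comparison: the paper does not prove this statement at all --- it is quoted as Olberding's result \cite[Proposition 4.5]{O1} --- so your proposal must be judged on its own merits rather than against an internal argument.

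Your outer scaffolding is sound and standard: finite character (Theorem \ref{Olb}) reduces to the finitely many maximal ideals containing $\mathfrak{c}=(D:D^\prime)$; overrings of stable domains are stable, so $D^\prime$ is an integrally closed stable domain, hence a strongly discrete Pr\"ufer domain with finite character \cite[Theorem 4.6]{O3}, and being one-dimensional and semilocal it is a PID; module-finiteness of $D^\prime$ is equivalent to finiteness of $D^\prime/\mathfrak{c}$ over $D$; Eakin--Nagata then yields that $D$ is Noetherian; $M^2=xM$ gives $e(D)=\ell(D/xD)=1+\ell_D(E(M)/D)$; \cite[Lemma 3.1]{SV74} converts multiplicity two into $2$-generation; and a Forster--Swan or direct finite-character patching globalizes. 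But the proof has a genuine gap exactly where you write ``Granting this'': the claim that stability of \emph{all} ideals forces $\ell_D(E(M)/D)=1$ --- equivalently, that the residue extensions $D/M\hookrightarrow D^\prime/N$ are finite and the multiplicity is two --- is the entire nontrivial content of Olberding's proposition, and you never prove it. That it cannot be waved through is shown by examples of the form $D=k+tK[[t]]$ with $k\subseteq K$ an infinite algebraic field extension: there $\mathfrak{c}=tK[[t]]\neq(0)$ and the maximal ideal $M=tK[[t]]=tD^\prime$ \emph{is} stable, yet $D^\prime=K[[t]]$ is not a finite $D$-module; stability of $D$ fails only because ideals such as $Vt+t^2K[[t]]$, with $V$ a $k$-subspace of $K$ not cyclic over its multiplier ring, are not stable. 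So one must genuinely trace how stability of these intermediate ideals bounds the residue extension, and no such argument appears in your sketch.

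Two further points. Your key claim is needed \emph{before} Noetherianity is available (Noetherianity only comes afterwards via Eakin--Nagata), and in that non-Noetherian setting even the preliminaries are not free: without finite generation of $M$ the determinant trick fails, so $E(M)=(M:M)$ is a priori only almost integral over $D$ (not obviously inside $D^\prime$), and lengths such as $\ell_D(E(M)/D)$ need not be finite; your phrase ``the residue data is governed by $\ell_D(E(M)/D)$'' therefore also requires justification. Finally, a small slip: from $M=xE(M)$ one gets $M/xD\cong E(M)/D$ and $M/M^2\cong E(M)/M$, not $M/M^2\cong E(M)/D$; this does not affect the structure of the argument, but the displayed isomorphism is the wrong one. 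As it stands, your proposal is a clean reduction of the theorem to its hardest step, not a proof of it.
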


\begin{corollary}  Let $D$ be a Mori $w$-stable domain.  If $(D:D^\prime)\neq (0)$,  $D$ is $w$-divisorial and $D[X]$ is $w$-stable and $w$-divisorial.
\end{corollary}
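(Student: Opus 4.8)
The plan is to reduce everything to the local structure theory already assembled and to apply Theorem \ref{SVOlb} to bridge the gap between the hypothesis $(D:D^\prime)\neq(0)$ globally and the local behavior at each $t$-maximal ideal. First I would observe that, since $D$ is Mori and $w$-stable, Theorem \ref{Mori1} tells us that $D$ has $t$-dimension one; consequently, for each $M\in\tmax(D)$ the localization $D_M$ is a one-dimensional stable domain (it is stable by \cite[Corollary 1.10]{GP} and one-dimensional by the $t$-dimension-one property). The key point is to transfer the hypothesis $(D:D^\prime)\neq(0)$ to the localizations: since conductors localize well and $(D^\prime)_M=(D_M)^\prime$ for the integral closure (integral closure commutes with localization), I would check that $(D_M:(D_M)^\prime)\neq(0)$ for every $M\in\tmax(D)$, so that each $D_M$ satisfies the hypothesis of Theorem \ref{SVOlb}.

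Granting this, Theorem \ref{SVOlb} applies locally and yields that each $D_M$ is $2$-generated, hence Noetherian (a $2$-generated domain is Noetherian) and totally divisorial by \cite[Theorem 3.1]{O5}. In particular $D_M$ is divisorial for each $t$-maximal $M$; combined with the fact that a $w$-stable domain is weakly Matlis (Corollary \ref{UMT1}(2), using $t$-dimension one), the characterization \cite[Theorem 1.5]{GE} of $w$-divisorial domains gives that $D$ is $w$-divisorial. Thus $D$ is simultaneously Mori, $w$-stable and $w$-divisorial.

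At this stage the conclusion about $D[X]$ is immediate: $D$ satisfies condition (i) of Theorem \ref{Mori3}, so by the equivalence (i) $\Leftrightarrow$ (ii) in that theorem, $D[X]$ is $w$-stable and $w$-divisorial. This is the cleanest route, since Theorem \ref{Mori3} already packages the local-to-global transfer through the Nagata ring and Proposition \ref{2gen}.

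The main obstacle I anticipate is the conductor-localization step: one must be sure that $(D:D^\prime)\neq(0)$ genuinely forces $(D_M:(D_M)^\prime)\neq(0)$ for each $t$-maximal $M$. This requires knowing that the integral closure commutes with localization (true because $D^\prime=\bigcap D_M^\prime$ when $D$ has $t$-finite character, and a Mori domain has $t$-finite character by \cite[Theorem 3.3]{Bar}), and that a nonzero conductor element survives in the localization — which it does, since $(D:D^\prime)D_M\subseteq(D_M:(D_M)^\prime)$ and localizing a nonzero ideal at a prime not containing it keeps it nonzero, while $0\neq(D:D^\prime)$ lies in $D\subseteq D_M$. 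The only subtlety is ensuring the inclusion $(D:D^\prime)D_M\subseteq(D_M:D_M^\prime)$, which follows from $D_M^\prime=(D^\prime)_M$ and the general fact that conductors behave well under flat (in particular localization) base change. Once this is pinned down, everything else is a direct citation.
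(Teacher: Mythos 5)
Your proposal is correct and follows essentially the same route as the paper's own proof: localize at each $M\in\tmax(D)$, observe that $D_M$ is a one-dimensional stable domain whose integral closure has nonzero conductor, apply Theorem \ref{SVOlb} and \cite[Theorem 3.1]{O5} to get that $D_M$ is $2$-generated and totally divisorial, deduce that $D$ is $w$-divisorial, and conclude for $D[X]$ via Theorem \ref{Mori3}. The only differences are cosmetic: you spell out the conductor-localization step that the paper leaves implicit (though the cleanest justification is simply that integral closure commutes with arbitrary localization, no Mori or $t$-finite character needed), and you reach $w$-divisoriality through weak Matlisness and \cite[Theorem 1.5]{GE} where the paper invokes \cite[Theorem 4.5]{GE}.
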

\begin{proof} For each $M\in \tmax(D)$, $D_M$ is one-dimensional stable (Theorem \ref{morionedim}) and its integral closure has nonzero conductor. Hence $D_M$ is 2-generated (Theorem \ref{SVOlb}), that is $D_M$ is totally divisorial \cite[Theorem 3.1]{O5}.
By \cite[Theorem 4.5]{GE}, $D$ is $w$-divisorial and so $D[X]$ is $w$-stable and $w$-divisorial (Theorem \ref{Mori3}).
\end{proof}

Explicit examples of local one-dimensional stable or  2-generated domains such that $(D:D^\prime) = (0)$ can be found in \cite[Section 3]{O7}.


\end{document}